\numberwithin{equation}{subsection}
\newtheorem{thm}{Theorem}[subsection]
\newtheorem{cor}{Corollary}[subsection]
\newtheorem{lemma}{Lemma}[subsection]
\theoremstyle{remark}
\newtheorem*{remark}{Remark}
\theoremstyle{definition}
\newcommand{\bz}{\mathbb{Z}}
\newcommand{\br}{\mathbb{R}}
\newcommand{\bn}{\mathbb{N}}
\newcommand{\fC}{\mathfrak{C}}
\newcommand{\cC}{\mathcal{C}}
\newcommand{\cD}{\mathcal{D}}
\newcommand{\cH}{\mathcal{H}}\newcommand{\cI}{\mathcal{I}}
\newcommand{\cM}{\mathcal{M}}
\newcommand{\cN}{\mathcal{N}}
\newcommand{\sumstar}{\sideset{}{^*}\sum}
\title{Second moment of degree three $L$-functions}
\author{Sampurna Pal}
\address{Theoretical Statistics and Mathematics Unit, Indian Statistical Institute, Kolkata, WB, India}
\email{sampu.andul@gmail.com}
\begin{document}
\begin{abstract}Let $F$ be a Hecke-Maa\ss\  cusp form for $\mathrm{SL}(3,\mathbb{Z})$. We obtain the first non-trivial upper bound of the second moment of $L(F,s)$ in $t$-aspect:
		$$	\int_{T}^{2T}|L(F,1/2+it)|^2 dt\ll_{F,\varepsilon} T^{3/2-3/32+\varepsilon}.$$
			Immediate corollaries include improvements over the existing results on the subconvexity bound for self-dual $\mathrm{GL}(3)$ $L$-functions in the $t$-aspect and for self-dual $\mathrm{GL}(3)\times \mathrm{GL}(2)$ $L$-functions in the $\mathrm{GL}(2)$ spectral aspect, the error term in the Rankin-Selberg problem, and the zero density estimate for $\mathrm{GL}(3)$ $L$-functions.
            \end{abstract}

\subjclass[2010]{11F66 (primary), 11M41 (secondary)}
\keywords{Second moment, Delta method, Subconvexity}
\maketitle{}
	\setcounter{tocdepth}{1}

	\section{Introduction}
 Let $F$ be a Hecke-Maa\ss \  cusp form for $\mathrm{SL}(3,\mathbb{Z})$ of type $\nu=(\nu_1,\nu_2)\in \mathbb{C}^2$ with normalized Fourier coefficients $A(m,n)$ (so that $A(1,1)=1$) and let $L(F,s)$ be its Godement-Jacquet $L$-function defined by$$L(F,s)=\sum_{n=1}^{\infty}A(1,n)n^{-s}\ \ \ \text{for } \Re(s)>1.$$ Then, we denote its  second moment in the $t$-aspect at the critical line as 
	\begin{equation}\label{FirstDefnofSecondMoment}
		M_F(T)=\int\limits_{T}^{2T}\left|L\left(F,1/2+it\right)\right|^2 dt.
	\end{equation}
 The generalized Lindel\"of hypothesis conjectures that we may bound it by $M_F(T)\ll T^{1+\varepsilon}$. On the other hand, the approximate functional equation implies a trivial upper bound of $M_F(T)\ll T^{3/2+\varepsilon}$. But any improvement over the exponent $\frac{3}{2}$ has remained an open problem so far. In this paper, we will address it by establishing the first non-trivial upper bound of the second moment $M_F(T)$ by proving the following.
	\begin{thm}\label{maintheorem}Let $F$ be a Hecke-Maa\ss \  cusp form for $\mathrm{SL}(3,\mathbb{Z})$. Then we have 
		\begin{equation}\label{maintheoremeqn}
			\int\limits_{T}^{2T}\left|L\left(F,1/2+it\right)\right|^2 dt\ll_{F,\varepsilon} T^{\frac{3}{2}-\frac{3}{32}+\varepsilon}.
		\end{equation}
	\end{thm}

	In Analytic number theory, understanding the integral moments of $L$-functions is of fundamental interest. 

	One of the very first results regarding the integral moments is due to Hardy and Littlewood, who in \cite{HL} derived an asymptotic expression of the second moment of the Riemann zeta function: $$\int_0^T\left|\zeta\left(1/2+it\right)\right|^2dt\sim\  T\log T.$$
	This asymptotic expression was improved by Atkinson \cite{At}. Then Ingham \cite{In} obtained an asymptotic formula for the fourth moment of the Riemann zeta function: $$\int_0^T\left|\zeta\left(1/2+it\right)\right|^4dt\sim\frac{1}{2\pi^2}\ T\log^4 T.$$ It was subsequently improved by Heath-Brown \cite{Hb2} and generalized to weighted moments by Motohashi (see \cite{Mo}). Other than these, no asymptotic expression is known for the higher moments ($k\geq 3$) of the Riemann zeta function. But it is conjectured that there exists some constant $C_k$ such that $\int_0^T\left|\zeta\left(1/2+it\right)\right|^{2k}dt\sim C_k\  T \log^{k^2}T$
 ; see \cite{CGh}, \cite{CGo}, \cite{KS},  \cite{DGH}, \cite{CFKRS}. 

 Though we do not have an asymptotic expression of the sixth and higher moments of the  Riemann zeta function, we do know of an upper bound for the twelfth moment of the Riemann zeta function due to  Heath-Brown \cite{Hb1}: $\int_0^T\left|\zeta\left(1/2+it\right)\right|^{12}dt\ll T^{2+\varepsilon}.$ From this result, with a simple application of Cauchy-Schwarz inequality, one can get non-trivial upper bounds of sixth, eighth and tenth moments. In particular, this implies a non-trivial upper bound of the sixth moment of the Riemann zeta function  $$\int_0^T\left|\zeta\left(1/2+it\right)\right|^{6}dt\ll T^{5/4+\varepsilon},$$
 which is an improvement over the trivial bound $T^{3/2+\varepsilon}$. The sixth moment of the Riemann zeta function is related to the second moment of the $L$-function associated with a minimal parabolic Eisenstein series for $\mathrm{SL}(3,\mathbb{Z})$. Hence, our main result (Theorem \ref{maintheorem}), i.e., the second moment of the $\mathrm{GL}(3)$ $L$-functions, can be thought of as a cuspidal analog of this problem.
  
    For degree two $L$-functions, Good \cite{Go} derived an asymptotic expansion for the second moment of the $L$-functions associated with holomorphic cusp forms. For a holomorphic cusp form $f(z):=\sum_{n=1}^{\infty}a_ne(nz)$ of weight $k$ and $L$-function $L(f,s)=\sum_{n= 1}^{\infty}a_nn^{-s}$ for $\Re(s)>\frac{k+1}{2}$, he proved 
	\begin{equation}\label{resultofgood}
		\int_0^T\left|L\left(f,k/2+it\right)\right|^2dt= 2c_{-}T\left(\log \left(T/2\pi e\right)+c_0\right)+O((T\log T)^{\frac{2}{3}}),\end{equation}
	for some constants $c_{-},c_0$ computed explicitly. Here, the power saving error term was important as it resulted in a subconvex bound of $L(f,s)$. But any asymptotic estimates of higher moments of degree two $L$-functions are still unknown.

    For degree three $L$-functions, though there has not been any improvement over the trivial bound of  $M_F(T)$ (\ref{FirstDefnofSecondMoment}) until this work, different variations of this expression have been extensively studied (cf. \cite{Li2}, \cite{Yo}). In a remarkable recent work, Aggarwal, Leung, and Munshi \cite{ALM} derived an upper bound for the second moment in a short interval: $[T-M, T+M]$ for $T^{1/2}<M<T^{1-\varepsilon}$. 
	  As the result is intended for the short moment, it fails to give any non-trivial upper bound for the full second moment ($M=T$).

  To obtain the trivial bound of $M_F(T)\ll T^{3/2+\varepsilon}$ (\ref{FirstDefnofSecondMoment}), we start with the application of the approximate functional equation (Lemma \ref{approximatefunctionaleqn}) by which we can truncate the sum over $n$ in the integrand up to $N=T^{3/2+\varepsilon}$. After opening up the absolute square and evaluating the oscillatory integral, we get 
	\begin{equation}\label{shiftedconvolutionsumfirst}
		M_F(T)\ll T^{-1/2}\left|\sum_{h\sim H}\sum_{n\sim N}A(1,n)A(1,n+h)\right|,\end{equation}
	for $H=T^{1/2+\varepsilon}$. The double sum can be bounded above by $NH$ by Cauchy-Schwarz inequality and the Ramanujan bound on average (\ref{ramanujanonaverage}). Thus, we get the trivial upper bound $M_F(T)\ll T^{3/2+\varepsilon}$.

 Thus, a natural way to obtain a non-trivial estimate of (\ref{FirstDefnofSecondMoment}) would have been obtaining a non-trivial upper bound of the shifted convolution sum (\ref{shiftedconvolutionsumfirst}), specifically for $H\sim N^{1/3}$ ($N\sim T^{3/2+\varepsilon}$). 
 Instead, we split the range of the $t$-integral $[T,2T]$ into shorter ranges of length $\xi\sim X$ and then evaluate the $t$-integral (see \ref{2.11}). At this stage, we also introduce an extra averaging of $\xi$ (new length of the $t$ integrals) in the range $[X,2X]$, i.e., we introduce an integral of the form $\frac{1}{X}\int_{\xi\sim X}$ (see \ref{2.4}). This is one of the crucial inputs of our paper. Here, $X$ is a new parameter that will be determined optimally at the end. This $\xi$ integral will be essential for obtaining the non-trivial bound, as it will be apparent at the end. 
 
 Now, we apply the delta method of Duke, Friedlander, and Iwaniec along with Munshi's conductor lowering trick to separate the oscillations. Then, we dualize the sums with the Voronoi summation formula and the Poisson summation formula. Then, similar to \cite{ALM}, we apply the duality principle of the large sieve to interchange the order of the summations and eliminate the Fourier coefficients $A(m,n)$ of $F$. After that, we apply the Poisson summation formula twice, and each time, we get a character sum and an oscillatory integral. We then carefully evaluate these character sums and oscillatory integrals to recover the trivial bound of $M_F(T)$. At this point, any extra cancellation would give us a non-trivial bound. 
 
 And we do have an oscillatory term left, reminiscent of the extra oscillatory term introduced at the beginning. We also have the extra integral we introduced at the beginning, precisely the integral over $\xi$ in the $[X,2X]$ range. Thus, we extract an extra saving out of that oscillatory integral, which is enough to give us the non-trivial bound in Theorem \ref{maintheorem}.  For a more detailed sketch of the proof, see Section \ref{S2}.

\textit{Remark added August $7, 2024$}: After the appearance of the first version of the present paper,   Dasgupta, Leung, and Young \cite{DLY} have further improved the exponent of Theorem \ref{maintheorem} and consequently the exponents in the following corollaries. 

	\subsection{Applications}
	Theorem \ref{maintheorem} also has some important consequences in the subconvexity problem, the Rankin-Selberg problem, and the zero density estimates.  
	\subsubsection{Subconvex bound of $\mathrm{GL}(3)$ $L$-functions:}
	For any degree $d$ $L$-function $L(f,s)$, the convexity principle provides the convexity bound in the $t$-aspect, i.e. $L(f,1/2+it)\ll_{f}(1+|t|)^{d/4+\varepsilon}$ and any improvement of this upper bound is called a subconvex bound in the $t$-aspect.

    For degree three $L$-functions, the first subconvex bound in $t$-aspect was established by Li \cite{Li2} for self-dual forms for $\mathrm{SL}(3,\mathbb{Z})$. It was extended to any general forms for $\mathrm{SL}(3,\mathbb{Z})$ by Munshi \cite{Mu1}.   In a recent pre-print, Aggarwal, Leung, and Munshi \cite{ALM} improved the subconvex bound of \cite{Mu1} by bounding a short second moment.	We also note that in a recent pre-print, Nelson \cite{Ne} has obtained the $t$-aspect subconvexity bound for standard $L$-function of any degree $d$. 
	
	Theorem \ref{maintheorem} also implies a $t$-aspect subconvex bound of degree three $L$-functions:\\
	\begin{cor}\label{cor1}Let $F$ be a Hecke-Maa\ss \  cusp form for $\mathrm{SL}(3,\mathbb{Z})$. Then we have
		\begin{equation}\label{subconvexitybound}
			L(F,1/2+it)\ll_{F,\varepsilon} (1+|t|)^{3/4-3/64+\varepsilon}.
		\end{equation}
	\end{cor}
	
	\begin{remark} The bound we obtain is weaker than the bound obtained in  \cite{Mu1}.\end{remark}
	\subsubsection{Subconvex bound of self-dual $\mathrm{GL}(3)$ $L$-functions:}
	Let $F$ be a self-dual Hecke-Maass cusp form for $\mathrm{SL}(3,\mathbb{Z})$ and let $u_j$ be an orthonormal basis of even Hecke-Maass cusp form for $\mathrm{SL}(2,\mathbb{Z})$ of Laplacian eigenvalue $\frac{1}{4}+t_j^2$. With these assumptions Li, in her pioneering work \cite{Li2}, proved the subconvex bounds $$L(F,1/2+it)\ll_{F,\varepsilon}(1+|t|)^{11/16+\varepsilon},\ L(F\otimes u_j,1/2)\ll_{F,\varepsilon}(1+|t_j|)^{11/8+\varepsilon}.$$ These bounds were later improved by McKee, Sun, and Ye \cite{MSY} and by Nunes \cite{Nu}. Recently, Lin, Nunes, and Qi \cite{LNQ} reached the limit of the moment method of Li. Let $f_j$ be a Hecke-Maass cusp form (not necessarily even) for $\mathrm{SL}(2,\mathbb{Z})$ of Laplacian eigenvalue $\frac{1}{4}+t_j^2$. Then they proved  
	$$L(F,1/2+it)\ll_{F,\varepsilon}(1+|t|)^{3/5+\varepsilon}, L(F\otimes f_j,1/2)\ll_{F,\varepsilon}(1+|t_j|)^{6/5+\varepsilon}.$$ 
	Their bounds rely on an upper bound of the second moment of $\mathrm{GL}(3)$ $L$-functions. Thus, our non-trivial upper bound of the second moment of $\mathrm{GL}(3)$ $L$-functions (Theorem \ref{maintheorem}) leads to the following improvements of the above bounds : 
	\begin{cor}\label{cor2} Let $F$ be a self-dual Hecke-Maa\ss \  cusp form for $\mathrm{SL}(3,\mathbb{Z})$ and let $f_j$ be a Hecke-Maa\ss\  cusp form for $\mathrm{SL}(2,\mathbb{Z})$ of Laplacian eigenvalue $\frac{1}{4}+t_j^2$. Then we have
		\begin{equation*}
			L(F,1/2+it)\ll_{F,\varepsilon}(1+|t|)^{45/77+\varepsilon}, L(F\otimes f_j,1/2)\ll_{F,\varepsilon}(1+|t_j|)^{90/77+\varepsilon}.
		\end{equation*}
	\end{cor} 
	\subsubsection{Rankin-Selberg Problem }
	Let $f$ be a normalized holomorphic Hecke cusp form or a Hecke-Maa\ss\  cusp form for $\mathrm{SL}(2,\mathbb{Z})$ and let $\lambda_f(n)$ be its $n$-th Hecke eigenvalue. The goal of the Rankin-Selberg problem is to bound the error term for the second moment of $\lambda_f(n)$, i.e. 
	$$\Delta(X)=\sum_{n\leq X}\lambda_f(n)^2-c X, $$ 
    where $c=L(\text{Sym}^2f,1)/\zeta(2)$. Rankin \cite{Ra} and Selberg \cite{Se} established the longstanding upper bound of $\Delta(X)\ll X^{3/5}$. Huang lowered the upper bound in \cite{Hu} establishing $\Delta(X)\ll X^{3/5-1/560}$. Theorem \ref{maintheorem} implies an improvement of this bound: 
	\begin{cor}\label{cor3} Let $f$ be a holomorphic Hecke cusp form or a Hecke-Maa\ss\  cusp form for $\mathrm{SL}(2,\mathbb{Z})$ and let $\lambda_f(n)$ be its $n$-th Hecke eigenvalue. Then we have 
		$$\Delta(X)\ll_{f,\varepsilon}  X^{45/77+\varepsilon}=X^{3/5-6/385+\varepsilon}.$$
	\end{cor} 
 
	\subsubsection{Zero density estimate for $\mathrm{GL}(3)$ $L$-functions} Let $L(f,s)$ be any $L$-function associated to an automorphic form $f$. Now by $N(\sigma,T)$ we denote the number of zeros $\rho=\beta+i\gamma$ of $L(f,s)$ in the region $\beta\geq \sigma$ and $|\gamma|\leq T$ where $\frac{1}{2}\leq \sigma\leq 1$. In the absence of any asymptotic expression, upper bounds of $N(\sigma,T)$ of the form 
	$$N(\sigma,T)\ll T^{A(\sigma)(1-\sigma)+\varepsilon}$$
	are usually referred to as zero-density estimates in the literature. 
    Mukhopadhyay and Srinivas \cite{MS} obtained zero density estimates for $L$-functions of arbitrary degree depending on the upper bound of the second moment for that $L$-function: 
    $$\int\limits_0^T|L(f,1/2+it)|^2 dt\ll T^{\alpha+\varepsilon}\implies N(\sigma,T)\ll \begin{cases}
    	T^{2\alpha(1-\sigma)+\varepsilon} &\text{for }\frac{1}{\alpha}\leq\sigma\leq 1 \\
    	T^{\frac{2}{\sigma}(1-\sigma)+\varepsilon}& \text{for }\frac{2}{3}\leq \sigma\leq \frac{1}{\alpha}
    \end{cases}.$$	
  In \cite{YZ}, Ye and Zhang obtained zero density estimates for $L$-functions of arbitrary degree depending on the upper bound of any $2k$th moment. Considering only the second moment bound, we may restate their result as 
  $$\int\limits_0^T|L(f,1/2+it)|^2 dt\ll T^{\alpha+\varepsilon}\implies  N(\sigma,T)\ll T^{\frac{2(1+\alpha)(1-\sigma)}{3-2\sigma}+\varepsilon}\text{ for }\frac{1}{2}\leq \sigma\leq 1.$$  Then if $L(F,s)$ be a $\mathrm{GL}(3)$ $L$-function and as both of these results depends on the upper bound of $t$-aspect moments of $L(F,s)$, our non-trivial bound of the second moment of $\mathrm{GL}(3)$ $L$-functions (\ref{maintheoremeqn}) implies the following improvement of the zero density estimates for $\mathrm{GL}(3)$ $L$-functions. 
	\begin{cor}\label{cor4}Let $L(F,s)$ Hecke-Maa\ss \  cusp form for $\mathrm{SL}(3,\mathbb{Z})$ and let $N(\sigma,T)$ denotes the number of zeros $\rho=\beta+i\gamma$ of $L(F,s)$ in the region $\beta\geq \sigma$ and $|\gamma|\leq T$ where $\frac{1}{2}\leq \sigma\leq 1$. Then we have
		$$N(\sigma,T)\ll \begin{cases}
			T^{(3-3/16)(1-\sigma)+\varepsilon}, &\text{for }\frac{32}{45}\leq \sigma\leq 1\\
			T^{\frac{(77/16)(1-\sigma)}{(3-2\sigma)}+\varepsilon}, &\text{for }\frac{1}{2}\leq \sigma \leq 1 
		\end{cases}.$$
	\end{cor}
	\subsection{Notations} We follow the popular notation of $e(x):=e^{2\pi i x}$ and $\varepsilon$ would denote an arbitrarily small positive real number. $f=O(A)$ and $f\ll A$ would denote $|f|\leq C AT^{\varepsilon}$ for some constant $C>0$ depending only on the form $F$ and $\varepsilon$. For some $n>0$, $x\sim n$ would imply $\ c_1 n< x< c_2 n$ where $0<c_1<c_2$ and $x\in\mathbb{R}$. In general, we will reserve this notation for the range of sum and integrals. $f\asymp g$ would imply $T^{-\varepsilon}<|f/g|<T^{\varepsilon}$ when $g$ is not zero. We will reserve the letter $V,W$ for compactly supported smooth function. They will additionally satisfy $x^{j}V^{(j)}(x)\ll 1$ for $j\in \mathbb{N}$, unless specified otherwise. The definition of these smooth functions $V,W$  will vary from place to place. For brevity, often we will omit the underlying smooth function of the form $V\left(\frac{x}{M}\right)$ in a sum or integral by denoting $\sum_{x\sim M}$ or $\int_{x\sim M}$. Any contribution is called negligible if the term is $O(T^{-A})$ for some fixed large $A$, say $A=1999$. We will frequently use the familiar notations $e(n)=e^{2\pi i n}$, $e_q(n)=e^{2\pi i n/q}$, $S(a,b,q)=\sumstar_{n\bmod q}e_q(a\bar n+bn)$ and $\mathfrak c_q(a)=\sum_{n\bmod q}e_q(an)$.   
	
	\section{Sketch of the proof}\label{S2}
	To prove Theorem \ref{maintheorem}, we start with the integral of $M_F(T)$ (\ref{FirstDefnofSecondMoment}) though we take the range to be $[2T,3T]$. By approximate functional equation, we can truncate the length of the Dirichlet series of $L(F,1/2+it)$ to $n\sim  N$ for $N\ll T^{3/2+\varepsilon}$. For the sketch, we will only consider $N=T^{3/2}$. So, we start with (see \ref{2.1})
    $M(T)=\int_{2T}^{3T}\left|\sum_{n\sim N}A(n)n^{-it}\right|^2dt.$ In Section \ref{S5}, we split the $t$ integral into smaller integrals of length $\xi$ and then take an average over $\xi$ (see \ref{2.4}), in the form of an integral on the range $[X,2X]$, where $X$ would be optimally chosen at the end.
	$$\int\limits_{2T}^{3T}|\cdots|^2dt\rightsquigarrow\sum_{\frac{T}{\xi}\leq r< \frac{2T}{\xi}}\int\limits_{T+r\xi}^{T+(r+1)\xi}|\cdots|^2dt\ll \sum_{\frac{T}{2X}\leq r<\frac{2T}{X}}\frac{1}{X}\int_{\xi\sim X}\int\limits_{T+r\xi}^{T+(r+1)\xi}|\cdots|^2dt$$

    We now open up the absolute square to get $\sum_h\sum_nA(1,n)\overline{A(1,n+h)}\int_{t}\cdots$. Then we evaluate the $t$-integral (see Lemma \ref{tintegrallemma}), which restricts the range of $h$ to be $h\ll H=NT^{\varepsilon}/X$, as the $t$-integral would be negligible otherwise. We essentially have $M_F(T)\ll N^{-1}\sum_{r\sim T/X}\mathfrak{M} $, where 
	$$\mathfrak{M}=\int\limits_{\xi\sim X}\sum_{h\ll H}\sum_{n\sim N }A(1,n)\overline{A(1,n+h)}e\bigg(\frac{(T+r\xi)h}{2\pi n}\bigg).$$
	Comparing it with (\ref{shiftedconvolutionsumfirst}), we note that the fragmentation of the integral has increased the size of $h$, introduced an extra oscillatory factor, and along with it, there is an extra integral over $\xi$. In the end, these two elements would give us the extra saving required for the non-trivial bound of Theorem \ref{maintheorem}.
	
	Now, we will separate the oscillations of $\mathfrak{M}$ with the circle method. In Section \ref{S6}, we would use the delta method of Duke, Friedlander, and Iwaniec (Lemma \ref{delta}) with modulus $Q=\sqrt{N/(T/X)}=\sqrt{X}T^{1/4}$. 
	\begin{equation}\label{2.17}
		\begin{split}
			\mathfrak{M}=&\int_{x\sim 1}\frac{1}{Q}\sum_{q\sim Q}\frac{1}{q}\ \sideset{}{^*}\sum_{a\bmod q}\int_{\xi\sim X}\sum_{h\ll H}e_q(-ah)\times\bigg[\sum_{n\sim N}A(1,n)e_q(-an)e\bigg(\frac{(T+r\xi)}{2\pi}\times \frac{h}{n}\bigg)\bigg]\\
			&\times\bigg[\sum_{m}A(1,m)e_q(am)e\bigg(\frac{(m-n-h)x}{qQ}\bigg)W\bigg(\frac{m-n-h}{Q^2}\bigg)\bigg].
		\end{split}
	\end{equation}
	Here, $W(x)$ is an even smooth function supported in $[-1,1]$ and $W(0)=1$. If we trivially evaluate $\mathfrak{M}$, we get $\mathfrak{M}\ll X\cdot H\cdot N\cdot Q^2=Q^2N^2\implies M_F(T)\ll T^3$. So in $\mathfrak{M}$, we need to save  $T^{3/2}$ and a little more to get a non-trivial bound; that is we need to prove $\mathfrak{M}\ll \frac{Q^2N^2}{T^{3/2+\delta}}$ for some $\delta>0$.
	
	To treat these $m$ and $n$-sums above, we employ the Voronoi type summation formula for $\mathrm{SL}(3,\mathbb{Z})$ (Lemma \ref{voronoi}). In Section \ref{S7}, we apply the Voronoi summation formula first on the $m$-sum and then on the $n$-sum. Though we have $A(m_2,m_1)$ in the RHS of the Voronoi summation formula along with the condition $m_1|q$, for brevity, we would only consider the ``generic case" $m_1=1$ in this sketch and write $m_2$ as $m$. In the $m$-sum, we encounter an oscillatory integral, and by repeated integration by parts, it restricts the range of $m\ll (T/Q)^{3+\varepsilon}$ (see Lemma \ref{L7.3}). Then, by Taylor series approximation of the phase function, the $m$ sum transforms into 
	$$\frac{Q^2}{N}\sum_{m\ll(T/Q)^3}A(m)S(\bar a, m;q)e\bigg(\frac{3m^{1/3}n^{1/3}}{q}\bigg)
	.$$ Thus in the $m$-sum we save $$\frac{\text{initial bound}}{\text{final bound}}=\frac{Q^2}{\frac{Q^3}{qN}\cdot\frac{T^3}{Q^3}\cdot q^{1/2}}\sim \frac{X^{5/4}}{T^{7/8}}.$$ We take the above oscillatory term into the $n$-sum and apply the Voronoi summation formula. There, we also face an oscillatory integral $I_2$ (see \ref{Iintegralnsum}) of trivial size $T$. So, the $n$ sum transforms to 
	\begin{equation*}\begin{split}\frac{1}{T}\sum_{n\ll(T/Q)^3}A(n)S(-\bar a,n;q)I_2.\end{split}
	\end{equation*}Thus in the $n$-sum we save 
	$$\frac{\text{initial bound}}{\text{final bound}}=\frac{N}{\frac{1}{T}\cdot \frac{T^3}{Q^3}\cdot q^{1/2}\cdot T}\sim \frac{X^{5/4}}{T^{7/8}}.$$ Finally, we put the oscillatory integral $I_2$ in the $h$-sum and apply the Poisson summation formula on $\sum_{h\ll H}e_q(ah)I_2(h).$ Thus, we get a double integral $I_3$ (see \ref{I3hsum}). This restrict the dual variable $h$ in the range $h\sim \frac{qT}{N}\sim \frac{\sqrt{X}}{T^{1/4}}$. After careful evaluation of the double integral, the $h$ sum transforms into (see Lemma \ref{Shfinallemma})
    $$T^{3/2}\sum_{\substack{h\sim \frac{\sqrt{X}}{T^{1/4}}\\h\equiv a\bmod q}} e\bigg(\frac{3(T+r\xi)^{1/3}(m^{1/3}-n^{1/3} )}{(2\pi)^{1/3}q^{2/3}h^{1/3}}\bigg).$$
	In the $h$-sum we save 
	$$\frac{\text{initial bound}}{\text{final bound}}=\frac{HT}{T^{3/2}\cdot \frac{1}{q}\cdot \frac{\sqrt{X}}{T^{1/4}}}\sim \frac{T^{3/2}}{X}.$$ 
	Hence we evaluate the total saving on $\mathfrak{M}$ to be $X^{3/2}/T^{1/4}$ and by Cauchy's inequality and the symmetry of the $n$ and the $m$ sum, $\mathfrak{M}$ can be bounded by 
	\begin{equation*}\begin{split}\mathfrak{M}\ll&\frac{T^{\varepsilon}}{T}\int_{\xi\sim X}\sum_{q\sim Q}\sum_{h\sim\sqrt{X}/T^{1/4}}\times\bigg|{\sum_{m\sim(T/Q)^3}}A(m)S(\bar h,m;q)e\bigg(\frac{3(T+r\xi)^{1/3} m^{1/3}}{(2\pi)^{1/3}q^{2/3}h^{1/3}}\bigg)\bigg|^2.
	\end{split}\end{equation*}
	
	In Section \ref{S8}, similar to \cite{ALM}, we use the duality principle of large sieve to interchange the order of sums in the expression of $\mathfrak{M}$. The Ramanujan bound on average (\ref{ramanujanonaverage}):  $\sum_{m_1^2m_2\leq X}|A(m_1,m_2)|^2\ll X^{1+\varepsilon}$ would imply that $\mathfrak{M}$ is bounded above by $$\mathfrak{M}\ll\frac{T^{+\varepsilon}}{T}\sum_{m\sim(T/Q)^3}|A(m)|^2\Delta\ll \frac{T^{2+\varepsilon}}{Q^3}\Delta,$$ where
	\begin{equation*}
		\begin{split}
			\Delta=\underset{||\alpha||_2=1}{\sup\limits_{\alpha}}\sum_{m\sim(T/Q)^3}\bigg|\int_{\xi\sim X}\sum_{q\sim Q}\sum_h\alpha(\xi,q,h)S(\bar h,m;q)\times e\bigg(\frac{3(T+r\xi)^{1/3} m^{1/3}}{(2\pi)^{1/3}q^{2/3}h^{1/3}}\bigg)d\xi\bigg|^2.
		\end{split}
	\end{equation*}
	Trivially we have $\Delta \ll T^{5/2}X$. To bound $\Delta$, we first open up the absolute square and apply the Poisson summation formula to the $m$-sum. We note that we denote the two copies of the variables $(\xi,q,h)$ as $(\xi_1,q_1,h_1)$ and $(\xi_2,q_2,h_2)$.  This produces a character sum $\mathfrak{C}$ and an oscillatory integral $\mathfrak{J}$ (see  \ref{CharacterSum}).
	$$	\sum_{m\sim (T/Q)^3}S(\bar h_1,m;q_1)S(\bar h_2,m;q_2)e(\cdots)=\frac{T^3}{Q^3q_1q_2}\sum_{m\in\mathbb{Z}}\mathfrak{C}\ \mathfrak{J},$$
	where
	\begin{equation*}
		\begin{split}
			\mathfrak{C}&=\sum_{\beta\bmod q_1q_2}S(\bar h_1,\beta;q_1)S(\bar h_2,\beta;q_2)e_{q_1q_2}(m\beta),~~
			\mathfrak{J}=\int_{y\sim 1}e(\cdots \ y^{1/3})e\bigg(-\frac{mT^3y}{Q^3q_1q_2}\bigg)dy.
		\end{split}
	\end{equation*} At this point, we will consider two cases of this expression: ``Diagonal" ($m=0$) and ``Off-diagonal" ($m\neq 0$). We will analyze them separately. 
	
	In Section \ref{S9}, we consider the diagonal case ($m=0$). In this case, the character sum turns out to be a Ramanujan sum along with the condition $q_1=q_2$:
	$\mathfrak{C}=q_1^2\mathfrak{c}_{q_1}(\bar{h}_1-\bar{h}_2)\delta_{q_1=q_2}.$
	We analyze the oscillatory integral with repeated integration by parts (Lemma \ref{repeatedint}) and observe that it is arbitrarily small unless $\xi_1-\xi_2\ll\frac{X^2}{T}$. By careful analysis of these two conditions, we see that we save $Q\frac{\sqrt{X}}{T^{1/4}}\cdot \frac{T}{X}\sim T$ in the diagonal case (see \ref{Deltam=0}). Before the application of the duality principle, we have saved $\frac{X^{3/2}}{T^{1/4}}$. So, this saving is sufficient as long as $\frac{X^{3/2}}{T^{1/4}}T\gg T^{3/2}\iff X\gg T^{1/2}$.
	
	In the off-diagonal case, both the character sum and the oscillatory integrals are more complicated. In Section \ref{S10}, we first analyze the oscillatory integral $\mathfrak{J}$ with stationary phase analysis (see Lemma \ref{Jintegrallemma}). For $m\ll \frac{X^{3/2}}{T^{3/4}}$ we have
	$$\mathfrak{J}\asymp \frac{1}{\sqrt{T/X}}e\left(2\sqrt{\frac{q_1q_2}{m}}\left(\Xi_2-\Xi_1\right)^{3/2}\right),\text{ where },~~\Xi_k=\left(\frac{(T+r\xi_k)}{2\pi q_k^2h_k}\right)^{1/3}\sim \frac{T^{1/4}}{\sqrt{X}},$$
   for $k=1,2$. We also note that the oscillation is of the size $T/X$.
    
    Next, we simplify the character sum $\mathfrak{C}$. For simplicity if we assume that $(q_1,q_2)=1$, by Chinese remainder theorem and reciprocity we can transform $\mathfrak{C}$ into 
	$$\mathfrak{C}\asymp q_1q_2\ e\left(\frac{q_2\bar q_1}{mh_1}+\frac{q_1\bar q_2}{mh_2}\right).$$ In reality there are additional terms depending on the g.c.d. of $q_1$ and $q_2$ and it requires careful evaluation (see \ref{CharsumSimplifiedA1A2}). So by the Poisson summation formula on the $m$-sum (in off-diagonal case ) we save $$\frac{\text{initial bound}}{\text{final bound}}=\frac{\frac{T^3}{Q^3}\cdot Q^{1/2}Q^{1/2}}{\frac{T^3}{Q^3}\cdot \frac{X^{3/2}}{T^{3/4}}\cdot \frac{\sqrt{X}}{\sqrt{T}}}\sim \frac{T^{3/2}}{X^{3/2}}.$$
	So, if we look at the expression, we essentially have 
	$$\sum_{m}\int_{\xi_1}\int_{\xi_2}\sum_{h_1}\sum_{h_2}\sum_{q_1}\sum_{q_2}\alpha_1\bar\alpha_2e\left(\frac{q_2\bar q_1}{mh_1}+\frac{q_1\bar q_2}{mh_2}\right)e(\cdots).$$
	We then apply the Cauchy-Schwarz inequality on the $(\xi_1,q_1,h_1)$-sum to get rid of $\alpha_1$ and open up the resulting absolute square and denote the resulting two copies of $(\xi_2,q_2,h_2)$ variables as $(\xi_2,q_2,h_2)$ and $(\xi_2',q_2',h_2')$. We now apply the Poisson summation formula on the $q_1$ sum. This produces another character sum $\mathcal{C}$ and oscillatory integral (see  \ref{afterpoissonofq1}). We take the $\xi_1$ integral along with the oscillatory integral and denote the two variable oscillatory integral as $\mathcal{I}$.
	\begin{equation*}\label{key}
		\begin{split}
			\int_{\xi_1}\sum_{q_1\sim Q}e\left(\frac{q_2\bar q_1}{mh_1}+\frac{q_1\bar q_2}{mh_2}-\frac{q_2'\bar q_1}{mh_1}-\frac{q_1\bar q_2'}{mh_2'}\right)e(\cdots)=\frac{Q}{mh_1h_2h_2'}\sum_{q_1\in \mathbb{Z}}\mathcal{C}\ \mathcal{I}.
		\end{split}
	\end{equation*}
	If we continue with our assumption $(q_1,q_2q_2')=1$, then by the Chinese remainder theorem character sum $\mathcal{C}$ appears to be a product of a  Kloosterman sum and a congruence relation: 
	$$\mathcal{C}\asymp S(\cdots,\cdots;mh_1)\cdot h_2h_2'\cdot \delta (\cdots\bmod h_2h_2').$$
	So we may bound $\mathcal{C}$ by $\sqrt{mh_1}$ on average. But in reality, our analysis of $\mathcal{C}$ depends on the common divisors of $q_1,q_2,q_2'$. On the other hand, we analyze the double integral $\mathcal{I}$ by multi-variable stationary phase analysis (see Lemma \ref{AnalysisofI}) and save $\frac{T}{X}$.  
    
    So, due to the Poisson summation formula on $q_1$ we save $$\frac{\text{initial bound}}{\text{final bound}}=\frac{QX}{\frac{Q}{\frac{X^{3}}{T^{3/2}}}\frac{X^{3/2}}{T^{3/4}}\cdot \frac{X}{\sqrt{T}}\frac{X^2}{T}}\sim \frac{T^{3/4}}{X^{1/2}}.$$ After the large sieve, the total saving in the off-diagonal case is $\frac{T^{3/2}}{X^{3/2}}\cdot\frac{T^{3/8}}{X^{1/4}}\sim \frac{T^{15/8}}{X^{7/4}}$. With the optimal choice of $X=T^{1/2}$, this saving equals the diagonal saving. Hence, we are at the threshold, i.e., we recover the trivial bound of $M_F(T)$. So, any additional saving in the off-diagonal will lead to a non-trivial bound of $M_F(T)$.   
	
	We now have an oscillatory term, which is the exponential term arising from the stationary phase analysis of $\mathcal{I}$ and its oscillation is of the size $T/X$. We apply Cauchy's inequality, keeping everything but the $\int_{\xi_2'}\alpha_2'e(\cdots )$ inside. So, we get 
    $$\int_{\xi_2'}\alpha_2'\int_{\xi_2''}\alpha_2''\int _{\xi_2}e(\cdots ).$$
    Then, by repeated integration by parts (Lemma \ref{repeatedint}) on the $\xi_2$ integral, we restrict the range of $|\xi_2'-\xi_2''|\ll \frac{X}{T/X}$ (see \ref{Xi2'X-2''range}). Thus, we save $T/X$. As we have applied Cauchy's inequality twice, the effective saving in $M_F(T)$ is $\frac{T^{1/4}}{X^{1/4}}$. 
    
    Hence, the total saving in the off-diagonal (after duality ) is $\frac{T^{15/8}}{X^{7/4}}\cdot \frac{T^{1/4}}{X^{1/4}}=\frac{T^{17/8}}{X^{2}}$. We equate this saving with the diagonal saving of $T$ to optimally choose $X=T^{9/16}$. Hence, in $M_F(T)$, we save $X^{3/2}T^{3/4}=T^{3/2+3/32}$ which gives us $$M_F(T)\ll T^{3/2-3/32+\varepsilon}.$$

	\section{Proof of the Corollaries}\label{S3}
	
	\subsection{Proof of Corollary \ref{cor1}}
	\begin{proof}
		This corollary is an easy consequence of Theorem \ref{maintheorem} if we follow the proof of corollary \cite{Go}. By residue theorem we would express $L(F,1/2+iT_0)$ as 
		\begin{equation*}
			\begin{split}
				L\bigg(F,\frac{1}{2}+iT_0\bigg)^2=&\frac{1}{2\pi i }\int\limits_{(1/\log T_0)}L\bigg(F,\frac{1}{2}+iT_0+s\bigg)^2\frac{e^{s^2}}{s}ds\\&-\frac{1}{2\pi i}\int\limits_{(-1/\log T_0)}L\bigg(F,\frac{1}{2}+iT_0+s\bigg)^2\frac{e^{s^2}}{s}ds.
			\end{split}
		\end{equation*}
		Then, using the functional equation and Stirling's approximation in the second integral, we get 
		\begin{equation}\label{3}
			\begin{split}
				\bigg|L\bigg(F,\frac{1}{2}+iT_0\bigg)\bigg|^2\ll& \int\limits_{-\infty}^{\infty}\bigg|L\bigg(F,\frac{1}{2}+\frac{1}{\log T_0}+i(T_0+t)\bigg)\bigg|^2\times \frac{(1+|t+T_0|^{\frac{3}{\log T_0}})e^{-t^2}}{(t^2+1/ \log^2T_0 )^{1/2}}dt .
			\end{split}
		\end{equation}In a similar fashion for $1/2<\sigma_1<1$ we obtain 
		\begin{equation}\label{4}
			|L(F,\sigma_1+iT_1)|^2\ll 1+\int\limits_{-\infty}^{\infty}\bigg|L\bigg(F,\frac{1}{2}+i(T_1+\tau)\bigg)\bigg|^2\frac{e^{-\tau^2}}{(\tau^2+(\frac{1}{2}-\sigma_1)^2)^{\frac{1}{2}}}d\tau
			.\end{equation}
		Finally in  (\ref{4}) we replace $\sigma_1$ by $1/2+1/\log T_0$ and $T_1$ by $T_0+t$ and apply this bound in (\ref{3}) and estimate the $t$ integral to get
		$$\bigg|L\bigg(F,\frac{1}{2}+iT_0\bigg)\bigg|^2\ll \log T_0+\log T_0\int\limits_{-\infty}^{\infty}\bigg|L\bigg(F,\frac{1}{2}+i(T_0+\tau)\bigg)\bigg|^2e^{-\frac{\tau^2}{2}}d\tau.$$
		Finally truncating the integral at $\tau \in [-\log T_0,\log T_0]$ and estimating the rest of the range trivially we prove the corollary. 
	\end{proof}
	\subsection{Proof of Corollary \ref{cor2}}
	\begin{proof}
		In the proof of \cite{LNQ} we proceed with the assumption  $$\int\limits_{-U}^{U}\big|L\big(F,\frac{1}{2}+it\big)\big|^2 dt\ll_{F,\varepsilon} U^{\frac{3}{2}-\delta+\varepsilon},\ \ \delta>0,$$ instead of the trivial bound of $U^{3/2+\varepsilon}$. Thus, we can replace the bound of (52) of \cite{LNQ} by 
		$$MT^{\varepsilon}\sqrt{R^{+}+U}U^{3/4-\delta/2}\ll \frac{T^{5/4-\delta/2+\varepsilon}}{M^{1/4-\delta/2}},$$
		for the $+$ case. Upper bounds for the diagonal case ($MT^{1+\varepsilon}$) and the - case ($\frac{T^{1+\varepsilon}}{M^{1/2}}+\frac{T^{5/4+\varepsilon}}{M}$) remain unchanged. Thus, we can replace the upper bound of Theorem 1.1 of \cite{LNQ} by 
		$$\sum_{|t_j-T|\leq M}L(F\otimes f_j,1/2)+\int\limits_{T-M}^{T+M}|L(F,1/2+it)|^2\ll MT^{1+\varepsilon}+\frac{T^{5/4-\delta/2+\varepsilon}}{M^{1/4-\delta/2}}.$$
		Then we choose $M=T^{\frac{1-2\delta}{5-2\delta}}$. Thus, we derive the upper bounds 
		\begin{equation*}\label{key}
			L(F,1/2+it)\ll_{F,\varepsilon}(1+|t|)^{\frac{3-2\delta}{5-2\delta}+\varepsilon}, L(F\otimes f_j,1/2)\ll_{F,\varepsilon}(1+|t_j|)^{\frac{6-4\delta}{5-2\delta}+\varepsilon}
			.\end{equation*}
		Finally, we apply Theorem \ref{maintheorem} and put $\delta=\frac{3}{32}$ to conclude the proof. 
	\end{proof}
	\subsection{Proof of Corollary \ref{cor3}}
	\begin{proof}
		We dominate $\sum_{n\leq 2X}\lambda_f(n)^2$ by taking a dyadic partition:
        $$\sum_{n\leq 2X}\lambda_f(n)^2\leq \sum_{j=0}^{\infty}\sum_n \lambda_f(n)^2V(n/2^{-j}X),$$
        with the help of a smooth function $V(x)$ which is supported in $[1-X^{-A},2+X^{-A}]$ and satisfies the decay condition $V^{(j)}\ll X^{jA}$ for $j\geq 1$, $0\leq V(x)\leq 1$ and $V(x)=1$ for $x\in [1,2]$. Similarly, we can bound $\sum_{n\leq 2X}\lambda_f(n)^2$ from below by taking a dyadic partition with the help of a smooth function $W(x)$, supported in $[1,2]$, $0\leq W(x)\leq 1$, $W(x)=1$ in $[1+X^{-A},2-X^{-A}]$ and satisfying the decay condition $W^{(j)}\ll X^{jA}$ for $j\geq 1$. We will only consider the first case and the analysis is similar for the second. 
        
        By inverse Mellin transform we may express the sum $\sum_{n}\lambda_f(n)^2V(n/X)$ (for some $c>2$) as 
		\begin{equation*}\label{key}
			\zeta(2)\sum_{n}\lambda_f(n)^2V(n/X)=\frac{1}{2\pi i}\int_{\Re(s)=c}L(f\otimes f, s)\tilde{V}(s)X^s ds
			,\end{equation*}
		where $\tilde V(s)=\int_0^{\infty}V(x)x^{s-1}dx$. After shifting the line of integral to $\Re(s)=1/2$ we apply the residue theorem and due to the pole at $s=1$ we get the residue $L(\text{Sym}^2f,1)\tilde{V}(1)X$. We note that $\tilde{V}(1)=1+O(X^{-A})$. For the lower bound, we obtain the same main term $L(\text{Sym}^2f,1)\tilde{W}(1)X$ and $\tilde{W}(1)=1+O(X^{-A})$. So we get 
		\begin{equation}\label{3.4}
			\Delta(2X)\ll X^{\varepsilon}\sup_{X_1\ll X}X_1^{1/2}\int_{-\infty}^{\infty}L\bigg(f\otimes f, \frac{1}{2}+it\bigg)\tilde{V}\bigg(\frac{1}{2}+it\bigg)X_1^{it} dt +O(X^{1-A})
			.\end{equation}
		We may truncate the range of $t$. If we integrate $\tilde{V}(1/2+it)$ by parts (Lemma \ref{repeatedint}) repeatedly and use the fact $V^{j}\ll X^{jA}$ then we observe $\tilde{V}(1/2+it)$ is arbitrarily small unless $t\ll X^{A+\varepsilon}$. In that range, we do a dyadic partition of the $t$-integral, thus considering $t\sim T\ll X^{A+\varepsilon}$. For $t\sim T$, by integrating $\int_0^{\infty}V(x)x^{-1/2+it}dx$ by parts, we note that 
		$\tilde{V}(1/2+it)\ll T^{-1}.$
		Putting it in (\ref{3.4}) and by Cauchy-Schwarz inequality we derive
		\begin{equation*}\label{key}
			\begin{split}
    \Delta(2X)=O\bigg[&X^{\varepsilon}\sup_{X_1\ll X}X_1^{1/2}\sup_{T\ll X^{A+\varepsilon}}T^{-1}\bigg(\int_{t\sim T}\bigg|\zeta\bigg(\frac{1}{2}+it\bigg)\bigg|^2dt\bigg)^{1/2}\\
				&\times \bigg(\int_{t\sim T}\bigg|L\bigg(\text{Sym}^2\  f, \frac{1}{2}+it\bigg)\bigg|^2dt\bigg)^{1/2}\bigg]+O(X^{1-A}). 
			\end{split}
			\end{equation*} 
		Assuming $\int\limits_{T}^{2T}\big|L\big(F,\frac{1}{2}+it\big)\big|^2 dt\ll_{F,\varepsilon} T^{\frac{3}{2}-\delta+\varepsilon}$ and using $\int_{t\sim T}|\zeta(1/2+it)|^2dt\ll T^{1+\varepsilon}$ we derive $$\Delta(2X)=O(X^{1/2+A(1/4-\delta/2)})+O(X^{1-A}).$$ We equate these two upper bounds to choose $A=1/(5/2-\delta)$. Thus we derive $$\Delta(2X)\ll X^{1-\frac{1}{5/2-\delta}+\varepsilon}\sim X^{\frac{3-2\delta}{5-2\delta}}.$$ Then by Theorem \ref{maintheorem} we replace $\delta=\frac{3}{32}$ to conclude the proof.  

	\end{proof}
\subsection{Proof of Corollary \ref{cor4}}
\begin{proof}
	The proof is immediate from Theorem \ref{maintheorem}, i.e. after putting $\alpha=\frac{3}{2}-\frac{3}{32}$ in the quoted results. 
\end{proof}

	\section{Preliminaries}
	\subsection{$\mathrm{SL}(3,\mathbb{Z})$ Maass forms }
	We briefly state basic properties of $\mathrm{SL}(3,\mathbb{Z})$ Maass forms here for which one may refer \cite{Gf}. Let $F$ be an $\mathrm{SL}(3,\mathbb{Z})$ Maass form of type $(\nu_1,\nu_2)$ which is also an eigenfunction of all the Hecke operators. Let $A_F(m,n)$ denotes the $(m,n)^{th}$ Fourier coefficients of $F$ normalized such that $A_F(1,1)=1$. Further on, we will drop the subscript $F$ and denote them as $A(m,n)$. We denote the dual form of $F$ by $\tilde{F}$ which is also an Maass form of type $(\nu_2,\nu_1)$ and with $(m,n)^{th}$ Fourier coefficients $A(n,m)=\overline{A(m,n)}$. The Langlands parameters of $F$ are  denoted by $(\alpha_1,\alpha_2,\alpha_3)$ are defined as
	\begin{equation}\label{langlands}
		\alpha_1=-\nu_1-2\nu_2+1,\  \alpha_2=-\nu_1+\nu_2,\  \alpha_3=2\nu_1+\nu_2-1
		.\end{equation}
	and the dual form $\tilde{F}$ has Langlands parameters $(-\alpha_3,-\alpha_2,-\alpha_1)$. $F$ also satisfies a Ramanujan-type bound on average, precisely
	\begin{equation}\label{ramanujanonaverage}
		\sum_{m_1^2m_2\leq x}|A(m_1,m_2)|^2\ll x^{1+\varepsilon}
		.\end{equation} There is an $L$-function associated to $F$, defined by 
	\begin{equation*}\label{key}
		L(F,s)=\sum_{n=1}^{\infty}A(1,n)n^{-s},
	\end{equation*}
	for $\Re(s)>1$, which satisfies the functional equation
	\begin{equation*}\label{functional}
		\begin{split}
        &G(F,s)L(F,s)=G(\tilde{F},1-s)L(\tilde{F},1-s),\\ \text{ where }~~&G(F,s)=\pi^{-3s/2}\Gamma\bigg(\frac{s-\alpha_1}{2}\bigg)\Gamma\bigg(\frac{s-\alpha_2}{2}\bigg)\Gamma\bigg(\frac{s-\alpha_3}{2}\bigg).
        \end{split}
	\end{equation*}
	These $L$-functions also satisfy an approximate functional equation in the critical strip.
	\subsection{Approximate Functional Equation}
	\begin{lemma}[Approximate Functional Equation]\label{approximatefunctionaleqn} For $0<\Re(s)<1$ and any bounded even function $g(u)$, holomorphic in the strip $-4<\Re(u)<4$ and normalized as $g(0)=1$ and $X>0$, 
		$$L(F,s)=\sum_{n=1}^{\infty}\frac{A(1,n)}{n^{s}}V_s\bigg(\frac{n}{X}\bigg)+\frac{G(\tilde{F},s)}{G(F,s)}\sum_{n=1}^{\infty}\frac{A(n,1)}{n^{1-s}}\tilde{V}_{1-s}(nX).$$
        For $\alpha$ satisfying $\Re(s+\alpha_i)>\alpha>0$ for $i=1,2,3$ and for any $A>0$, we have the following bounds on $V_s(y)$ (exact bounds hold for $\tilde{V}_s(y)$ too) and its derivatives w.r.t $y$:$$y^{j}V_s^{(j)}=\delta_j+O\bigg(\frac{y}{\sqrt{\mathfrak{q}_{\infty}}}\bigg)^{\alpha}~~\text{ and }~~y^{j}V_s^{(j)}\ll\bigg(1+\frac{y}{\sqrt{\mathfrak{q}_{\infty}}}\bigg)^{-A}$$
        and if $\Re(s)=\frac{1}{2}$, we also have
		$\frac{G(\tilde{F},s)}{G(F,s)}\ll 1.$
	\end{lemma}
	
    	\subsection{Voronoi summation formula for $\mathrm{SL}(3,\mathbb{Z})$:}
	Following the notations set at the beginning, let $F$ be an $\mathrm{SL}(3,\mathbb{Z})$ Maass form with $(m,n)^{th}$ Fourier coefficient $A(m,n)$ and let $\tilde{F}$ be its dual form with Fourier coefficients $A(n,m)$. Then, we have a summation formula for $A(1,m)$ twisted by additive characters. We precisely follow the expression of Corollary 3.7 of  \cite{GL}, which we summarize in the lemma below.
	\begin{lemma}[Voronoi type summation formula]\label{voronoi}Let $\psi(x)\in C_c^{\infty}(0,\infty)$ and let $a,\bar{a},q\in \mathbb{Z}$ with $(a,q)=1$. 
		Then we have
		\begin{equation*}\label{key}
			\begin{split}
				\sum_{m=1}^{\infty}A(1,m)e\bigg(\frac{m\bar{a}}{q}\bigg)\psi(m)
				=&\frac{q\pi^{-5/2}}{4i}\sum_{\pm}\sum_{m_1|q}\sum_{m_2>0}\frac{A(m_2,m_1)}{m_1m_2} S(a,\pm m_2;qm_1^{-1})\Psi_{0,1}^{\pm}\bigg(\frac{m_2m_1^2}{q^3}\bigg).
			\end{split}
		\end{equation*}	
	\end{lemma}
	In the above lemma   $\Psi_{0,1}^{\pm}\big(\frac{m_2m_1^2}{q^3}\big)=\Psi_0\big(\frac{m_2m_1^2}{q^3}\big)\pm\frac{\pi^{-3}q^3}{m_1^2m_2i}\Psi_1\big(\frac{m_2m_1^2}{q^3}\big)$ consists of four terms. But we will only estimate $\Psi_0(x)$ with the help of the following lemma by \cite{Li1} (Lemma 6.1) and consider only the term $\Psi_0(x)$. The estimate of $\Psi_1(x)$ is quite similar; thus are the remaining three terms.
	\begin{lemma}
		Suppose $\psi(x)$ is a smooth function compactly supported on $[X,2X]$ and  $\Psi_0(x)$ is defined as above, then for any fixed integer $K\geq1$ and $xX\gg1$, we have
		\begin{equation*}\label{key}
			\begin{split}
				\Psi_0(x)=&2\pi^4xi\int\limits_{0}^{\infty}\psi(y)\sum_{j=1}^{K}\frac{c_j\cos(6\pi x^{1/3}y^{1/3})+d_j\sin(6\pi x^{1/3}y^{1/3})}{(\pi^3xy)^{j/3}}dy+O\left((xX)^{\frac{-K+2}{3}}\right),
			\end{split}
		\end{equation*}	
		
		where $c_j$ and $d_j$ are constants depending on $\alpha_i$, in particular, $c_1=0,\ d_1 = \frac{-2}{\sqrt{3\pi}}$.
	\end{lemma}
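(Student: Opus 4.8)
The plan is to peel the test function $\psi$ off and reduce the statement to the large-argument asymptotics of a fixed Mellin--Barnes kernel, which I would then treat by Stirling's formula followed by stationary phase.

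\textbf{Step 1: reduction to a kernel.} Since $\psi\in C_c^\infty(0,\infty)$, its Mellin transform $\tilde\psi(-s)=\int_0^\infty\psi(y)\,y^{-s}\,\tfrac{dy}{y}$ is entire. Inserting this into the definition of $\Psi_0$ and interchanging the two integrations (the inner one being the conditionally convergent oscillatory integral that defines the $GL(3)$ Hankel kernel, for $\Re s=\sigma$ in a suitable range where it is independent of $\sigma$ and lies to the right of the poles of the gamma ratio) gives
\begin{equation*}
\Psi_0(x)=\int_0^\infty\psi(y)\,\mathcal{K}(\pi^3xy)\,\frac{dy}{y},\qquad \mathcal{K}(z):=\int_{\Re(s)=\sigma}z^{-s}\,G(s)\,ds,\qquad G(s):=\prod_{i=1}^3\frac{\Gamma\big(\tfrac{1+s+\alpha_i}{2}\big)}{\Gamma\big(\tfrac{-s-\alpha_i}{2}\big)}.
\end{equation*}
Because $\psi$ is supported on $[X,2X]$ and $xX\gg1$, the kernel is only ever evaluated at $z=\pi^3xy\gg1$, so the whole lemma becomes equivalent to an asymptotic expansion of $\mathcal{K}(z)$ as $z\to\infty$: once one shows $\mathcal{K}(z)=z^{2/3}\sum_{j=1}^{K}\big(c'_j\cos(6z^{1/3})+d'_j\sin(6z^{1/3})\big)z^{-(j-1)/3}+O\big(z^{(2-K)/3}\big)$, the identities $6z^{1/3}=6\pi x^{1/3}y^{1/3}$ and $z^{2/3}z^{-(j-1)/3}y^{-1}=\pi^3x\,(\pi^3xy)^{-j/3}$, together with integrating the error against $\psi$ over $y\asymp X$, reproduce the stated formula with error $O((xX)^{(-K+2)/3})$.

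\textbf{Step 2: Stirling for $G$.} Using the reflection and duplication formulae together with $\alpha_1+\alpha_2+\alpha_3=0$ from (\ref{langlands}), one may rewrite $G(s)=-\pi^{-3/2}\,2^{-3s}\prod_{i=1}^3(s+\alpha_i)\sin\big(\tfrac{\pi(s+\alpha_i)}{2}\big)\Gamma(s+\alpha_i)$, which already makes the key point transparent (and it is equally visible by applying Stirling to the numerator and denominator of $G$ directly): those gamma factors have imaginary arguments $\approx\pm t/2$ of equal size, so the factors $e^{-\pi|t|/4}$ cancel in the ratio and $G$ grows only polynomially. Thus, on $\Re s=\sigma$ with $|t|\gg1$,
\begin{equation*}
G(\sigma+it)=|t|^{3\sigma+3/2}\,e^{i\phi(t)}\big(1+O(|t|^{-1})\big),\qquad \phi(t)=3t\log|t|-3t\log 2-3t+O(1),
\end{equation*}
where in fact both the amplitude and $\phi$ carry full asymptotic expansions in powers of $|t|^{-1}$ with coefficients explicit in the $\alpha_i$.

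\textbf{Step 3: stationary phase and reassembly.} Writing $\mathcal{K}(z)=i\int_{\mathbb{R}}z^{-\sigma}e^{-it\log z}G(\sigma+it)\,dt$, the total phase is $\Phi(t)=\phi(t)-t\log z$, with $\Phi'(t)=3\log\big(|t|/(2z^{1/3})\big)$ and $\Phi''(t)=3/t$. Away from $|t|\asymp 2z^{1/3}$ the derivative $\Phi'$ is bounded away from $0$, so repeated integration by parts (lowering the power of $|t|$ at each step, and trivially for bounded $|t|$ since $z^{1/3}\gg1$) shows those ranges contribute at lower order. On the localised range one applies Lemma \ref{StationaryPhase1} --- its hypotheses (\ref{8.7})--(\ref{8.8}) hold because $z^{1/3}\gg1$ --- at the two stationary points $t_0=\pm2z^{1/3}$, at which $\Phi(t_0)=\mp6z^{1/3}$. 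Each term of the output combines one term of the Stirling series for $G$ with one term of the stationary-phase series, consecutive terms differing by a factor $z^{-1/3}$, which yields the expansion of $\mathcal{K}(z)$ claimed in Step 1; tracking the leading constant (the factor $\sqrt{2\pi/|\Phi''(t_0)|}=\sqrt{2\pi t_0/3}$, the phases $e^{\pm i\pi/4}$, and the $-\pi^{-3/2}$ from $G$, together with the fact that the two stationary points give complex-conjugate contributions) produces $c_1=0$ and $d_1=-2/\sqrt{3\pi}$. Feeding this back through Step 1 completes the proof.

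\textbf{Main obstacle.} The substantive part is Step 3: one must carry the Stirling expansion of a ratio of three $\Gamma$-factors and the stationary-phase expansion simultaneously, with enough uniformity that the errors genuinely telescope in powers of $z^{-1/3}$ and that the off-stationary and tail ranges really are negligible (for a suitable choice of $\sigma$). This is precisely the asymptotic analysis of the $GL(3)$ Bessel kernel; the full computation, including the explicit values of all $c_j,d_j$, is carried out in \cite{Li1}, whose Lemma 6.1 this is.
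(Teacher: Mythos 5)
The paper does not prove this lemma; it is quoted verbatim from Li \cite{Li1}, Lemma 6.1, and the paper simply cites it. Your proposal is therefore not being compared against a proof in the paper, but against the proof in \cite{Li1}. That said, your sketch is essentially a correct reconstruction of the standard argument behind that result, and the individual computations you do carry out check out: the reflection–duplication manipulation $G(s)=-\pi^{-3/2}2^{-3s}\prod_i(s+\alpha_i)\sin(\pi(s+\alpha_i)/2)\Gamma(s+\alpha_i)$ (using $\sum\alpha_i=0$), the phase $\Phi'(t)=3\log(|t|/(2z^{1/3}))$ giving stationary points at $t_0=\pm 2z^{1/3}$ with $\Phi(t_0)=\mp 6z^{1/3}$, and the $z^{2/3}$ leading amplitude are all right.

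Two points deserve more care than your sketch gives them. First, in Step 1 the integral $\mathcal{K}(z)=\int_{\Re s=\sigma}z^{-s}G(s)\,ds$ is not absolutely convergent for the natural range of $\sigma$ (one has $|G(\sigma+it)|\asymp|t|^{3\sigma+3/2}$, so absolute convergence forces $\sigma<-5/6$, which collides with the pole structure needed elsewhere); the decay in the original $\Psi_0$ integral comes entirely from $\tilde\psi(-s)$. So ``peeling off $\psi$'' and interchanging is not a routine Fubini application — Li avoids this by keeping $\tilde\psi$ in play and shifting/truncating contours, or by regularising the kernel. You flag the integral as conditionally convergent, but the interchange itself needs an argument. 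Second, the claim $c_1=0$, $d_1=-2/\sqrt{3\pi}$ is asserted from ``tracking the leading constant'' without actually combining the two stationary-point contributions with their $e^{\pm i\pi/4}$ phases, the argument of $G(\sigma\pm 2iz^{1/3})$, and the Jacobian $i$ from $ds=i\,dt$; the cancellation that kills the $\cos$ term is a genuine computation, not an afterthought. Both gaps are honestly acknowledged by you (and handled in full in \cite{Li1}), so this is an acceptable proof sketch, but it is not a self-contained proof.
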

We note that the oscillatory part $e(3x^{1/3}y^{1/3})$ is independent of the sum over $j$ and the non-oscillatory terms $(\pi^3xy)^{-j/3}$ decrease with increasing $j$ provided $xy\gg1$. So $\sum_{j=1}^{K}(\pi^3xy)^{-j/3}$ is asymptotic to the term $(\pi^3xy)^{-1/3}$, i.e. $j=1$. Hence we take $K$ sufficiently large so that the error term $O\big((xX)^{\frac{-K+2}{3}}\big)$ can be dropped from further consideration. For such $K$, we will consider only the term $j=1$. For $xy\ll 1$, as the term $e(3x^{1/3}y^{1/3})$ is non-oscillatory, it can be absorbed into the smooth function of $\psi(y)$. Thus, we will consider 
\begin{equation}\label{estimateofpsi}
	\Psi_0(y)\asymp \sum_{\pm} 2\pi^4c_{\pm}xi\int\limits_{0}^{\infty} \psi(y)\frac{ e(\pm 3x^{1/3}y^{1/3})}{(\pi^3xy)^{1/3}}\ \ dy
	.\end{equation}
\subsection{Poisson summation formula}
Let $f(x)$ be a compactly supported integrable function and $C(n)$ be a periodic function modulo $q$. Then, by the Poisson summation formula, we get 
\begin{align}
    \sum_{n\in \bz}C(n)f(n)=\frac{1}{q}\sum_{n\in \bz}\sum_{b\bmod q}C(b)e_q(nb)\int_\br f(y)e\left(-\frac{ny}{q}\right)dy.\label{Poissonsum}
\end{align}
In particular, if $C(n)=e_q(an)$, we have 
\begin{align}
    \sum_{n\in \bz}e_q(an)f(n)
    =\sum_{\substack{n\in \bz\\n\equiv -a\bmod q}}\int_\br f(y)e\left(-\frac{ny}{q}\right)dy.\label{Poissonadditive}
\end{align}
	\subsection{Stationary Phase Analysis}
	To treat the oscillatory integrals of one variable, we will use the following lemmas.

     When the phase function does not have a stationary point, we will use the following lemma from \cite{Mu1}. 
    \begin{lemma}\label{repeatedint}
     Let $g(x)$  be a compactly supported smooth function supported in $[a,b]$ satisfying $g^j(x)\ll_{a,b,j} 1$. Let $f(x)$ be a real valued smooth function satisfying $|f'(x)|\geq \Theta_f$ and $|f^{(j)}(x)|\ll \Theta_f$ for $j\geq 2$. Then, for any $j\in \bn$, we have 
     \begin{align}
         \int_{a}^be(f(x))g(x)dx\ll_{a,bj,\varepsilon} \Theta_f^{-j+\varepsilon}. 
     \end{align}
    \end{lemma}
    When the phase function has a unique stationary point, we will use the following lemma from \cite{BKY} by Blomer, Khan, and Young. So, we restate Proposition 8.2 of \cite{BKY} below.  
	\begin{lemma}\label{StationaryPhase1}
		Let $0<\delta<1/10, \Theta_g,\Theta_f,\Omega_g,L,\Omega_f>0$ and let $Z:=\Omega_f+\Theta_f+\Theta_g+L+1$ and we also assume that 	
		\begin{equation}\label{8.7}
			\Theta_f\geq Z^{3\delta},\ \ L\geq \Omega_g\geq \frac{\Omega_fZ^{\delta/2}}{\Theta_f^{1/2}}
			.\end{equation}
		Let $g(x)$ be a compactly supported smooth function with support in a length $L$ and satisfying the derivative $g^{(j)}(x)\ll \Theta_g\Omega_g^{-j}$ and let $x_0$ be the unique point such that  $f'(x_0)=0$, where $f(x)$ is a smooth function satisfying 
		\begin{equation}\label{8.8}
			f''(x)\gg \Theta_f\Omega_f^{-2},\ \ f^{(j)}(x)\ll \Theta_f \Omega_f^{-j},\ \ \ \forall j\in \mathbb{N}
			.\end{equation} 
		Then the oscillatory integral $I=\int\limits_{-\infty}^{\infty}g(x)e(f(x))dx$ would have the asymptotic expression (for arbitrary $A>0$)
		\begin{equation}\label{8.9}
			I=\frac{e(f(x_0))}{\sqrt{f''(x_0)}}\sum_{n\leq 3\delta^{-1}A}p_n(x_0)+O_{A,\delta}(Z^{-A}),
		\end{equation}
		where \begin{equation}\label{8.10}
			\begin{split}p_n(x_0)&=\frac{e^{\pi i/4}}{n!}\bigg(\frac{i}{2f''(x_0)}\bigg)^n G^{(2n)}(x_0)\\ \text{ where } G(x)&=g(x)e(f(x)-f(x_0)-f''(x_0)(x-x_0)^2/2).\end{split}
		\end{equation}
		Each $p_n$ is a rational function in derivatives of $f$ satisfying 
		\begin{equation}\label{8.11}
			\frac{d^j}{dx_0^j}p_n(x_0)\ll \Theta_g(\Omega_g^{-j}+\Omega_f^{-j})((\Omega_g^2\Theta_f/\Omega_f^2)^{-n}+\Theta_f^{-n/3})
			.\end{equation}
	\end{lemma}
	\begin{remark}
		As observed in \cite{BKY}, from (\ref{8.7}) and (\ref{8.11}), in the asymptotic expression (\ref{8.10}), every term is smaller than the preceding term. So it is enough to consider the leading term in the asymptotic provided we verify \ref{8.7}   
	\end{remark}
    For stationary phase analysis on  multivariate integrals,  we will mention Theorem 7.7.1 and Lemma 7.7.5 of 
\cite{Ho}, . Here, we will only mention the $\mathbb{R}^2$ version of the lemma stated in \cite{HMQ}. 
\begin{lemma}[Theorem 7.7.1]\label{Hormandertheorem}
Let $K\subset\mathbb{R}^2$ be a compact set and let $X$ be an open subset of $\mathbb{R}^2$ containing $K$ and let $k$ be a non-negative integer. Let $u\in C_c^k(K)$ and $f\in C^{k+1}(X)$ where $f$ is a real valued bounded function. Then, for $\lambda>0$, 
\begin{equation*}\label{key}
\bigg|\int_{K}e(\lambda f(x))u(x)dx\bigg|\leq C \lambda^{-k}\sum_{j_1+j_2\leq k}\sup|\partial_1^{j_1}\partial_2^{j_2}u||f'|^{j_1+j_2-2k}
.\end{equation*}
where $C$ is bounded as long as $f$ stays bounded in $C^{k+1}(X)$. 
\end{lemma}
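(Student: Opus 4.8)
The plan is to prove this by the classical integration-by-parts (non-stationary phase) argument of H\"ormander, exploiting that on the compact support of $u$ one has a lower bound $|f'|\geq c>0$ (if $f'$ vanishes somewhere on $\operatorname{supp}u$ the right-hand side is $+\infty$ and there is nothing to prove). For $k=0$ the bound is trivial, being just $|\int_K e(\lambda f)u|\leq|K|\sup|u|$, so assume $k\geq 1$.

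First I would introduce the first-order operator that reproduces the phase,
$$L=\frac{1}{2\pi i\lambda}\,\frac{1}{|f'|^{2}}\sum_{j=1}^{2}(\partial_j f)\,\partial_j,$$
which is smooth on $\operatorname{supp}u$ since $|f'|$ is bounded below there, and satisfies $L\bigl(e(\lambda f)\bigr)=e(\lambda f)$ by the chain rule. Its formal transpose is
$${}^{t}L\,v=-\frac{1}{2\pi i\lambda}\sum_{j=1}^{2}\partial_j\!\left(\frac{\partial_j f}{|f'|^{2}}\,v\right),$$
and because $u$ is compactly supported the boundary terms vanish on integration by parts, giving, after $k$ applications,
$$\int_K e(\lambda f(x))\,u(x)\,dx=\int_K e(\lambda f(x))\,\bigl({}^{t}L\bigr)^{k}u(x)\,dx.$$
Thus the whole estimate reduces to a pointwise bound for $\bigl({}^{t}L\bigr)^{k}u$.

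The heart of the matter is the structural claim, proved by induction on $k$: $\bigl({}^{t}L\bigr)^{k}u$ is a finite sum of terms, each equal to $\lambda^{-k}$ times the product of a single derivative $\partial_1^{j_1}\partial_2^{j_2}u$ with $j_1+j_2\leq k$, a polynomial in the partial derivatives $\partial^{\beta}f$ with $1\leq|\beta|\leq k+1$, and a power $|f'|^{j_1+j_2-2k}$. The inductive step just tracks what one application of ${}^{t}L$ does to such a term: it contributes a factor $\lambda^{-1}$, and either (i) the derivative $\partial_\ell$ lands on the $u$-derivative, raising its order by one while the accompanying coefficient, which carries an extra $\partial_\ell f/|f'|^{2}\ll|f'|^{-1}$, changes the power of $|f'|$ from $j_1+j_2-2k$ to $(j_1+j_2+1)-2(k+1)$; or (ii) $\partial_\ell$ lands on a coefficient, leaving the $u$-derivative of order $j_1+j_2$ but replacing $|f'|^{j_1+j_2-2k}$ by $|f'|^{j_1+j_2-2(k+1)}$ (a derivative hitting $|f'|^{-2}$ raises the denominator power by two and the order of the derivatives of $f$ by one) and again multiplying by the fresh $\partial_\ell f/|f'|^{2}$. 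Either way the stated structure is reproduced, and the highest derivative of $f$ that can appear after $k$ steps has order $k+1$. Bounding the polynomial in the $\partial^{\beta}f$ by a constant $C$ depending only on $k$ and on $\sup_K|\partial^{\beta}f|$ for $|\beta|\leq k+1$, then summing the finitely many terms and taking $\sup_K$, gives
$$\sup_K\bigl|\bigl({}^{t}L\bigr)^{k}u\bigr|\leq C\,\lambda^{-k}\sum_{j_1+j_2\leq k}\sup_K\bigl|\partial_1^{j_1}\partial_2^{j_2}u\bigr|\,|f'|^{j_1+j_2-2k},$$
and multiplying by the measure $|K|$ (absorbed into $C$) yields the asserted inequality.

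The only real obstacle is the bookkeeping in the inductive claim: one must verify that no term ever needs more than $k+1$ derivatives of $f$ and that the exponent of $|f'|$ attached to a term carrying $\partial_1^{j_1}\partial_2^{j_2}u$ is \emph{exactly} $j_1+j_2-2k$ rather than something larger in absolute value. I expect this to be purely mechanical once the structural statement is formulated precisely and the single-step computation above is carried out carefully; the constant's dependence on $f$ only through its $C^{k+1}(K)$-norm is then automatic, which is what makes the lemma uniform enough to feed into the analysis of $\mathcal{I}$.
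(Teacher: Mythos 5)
The paper does not actually prove this lemma; it quotes Theorem 7.7.1 from H\"ormander's book \cite{Ho} without proof, so there is no argument in the paper to compare against. What you have written is the classical non-stationary-phase argument, and the overall strategy — introducing the operator $L$ that reproduces the phase, integrating by parts $k$ times via its formal transpose, and then bounding $\bigl({}^{t}L\bigr)^{k}u$ pointwise — is correct and is the standard route to this estimate.

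The one place that should be tightened is the inductive structural claim. You assert that each term of $\bigl({}^{t}L\bigr)^{k}u$ is \emph{exactly} $\lambda^{-k}$ times $\partial_1^{j_1}\partial_2^{j_2}u$ times a polynomial in $\partial^{\beta}f$ times $|f'|^{j_1+j_2-2k}$, and in case (ii) of the inductive step you claim the $|f'|$-exponent always drops by two. That is not what the Leibniz expansion literally produces: when the new derivative $\partial_\ell$ falls on the polynomial factor of the coefficient (rather than on the power of $|f'|$ or on $u$), the new term picks up only the single factor $\partial_\ell f/|f'|^{2}\ll|f'|^{-1}$ from the fresh $N_\ell=\partial_\ell f/|f'|^2$, and the $|f'|$-exponent drops by one, not two, leaving a surplus positive power of $|f'|$. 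This does not damage the argument, since $K$ is compact and $f\in C^{k+1}(X)$, so $\sup_K|f'|$ is finite and the excess can be absorbed into the constant $C$, which is already permitted to depend on the $C^{k+1}$-norm of $f$. But the inductive conclusion should be stated as a pointwise inequality
\begin{equation*}
\bigl|\bigl({}^{t}L\bigr)^{k}u\bigr|\leq C\,\lambda^{-k}\sum_{j_1+j_2\leq k}\bigl|\partial_1^{j_1}\partial_2^{j_2}u\bigr|\,|f'|^{j_1+j_2-2k},
\end{equation*}
not as a structural identity. A cleaner way to run the induction is to first establish the elementary estimate $\bigl|\partial^{\gamma}\bigl(\partial_j f/|f'|^{2}\bigr)\bigr|\leq C_{|\gamma|}\,|f'|^{-1-|\gamma|}$, with $C_{|\gamma|}$ depending only on the $C^{|\gamma|+1}$-norm of $f$ on $K$, and then Leibniz-expand $\bigl({}^{t}L\bigr)^{k}u$: each resulting term is $\lambda^{-k}\partial^{\alpha}u$ times a product of $k$ factors $\partial^{\gamma_i}\bigl(\partial_{j_i}f/|f'|^{2}\bigr)$ with $|\alpha|+\sum_{i=1}^{k}|\gamma_i|=k$, so the coefficient is bounded by $\prod_{i=1}^{k}|f'|^{-1-|\gamma_i|}=|f'|^{-k-(k-|\alpha|)}=|f'|^{|\alpha|-2k}$, exactly the claimed exponent. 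With this repair the rest of your argument goes through as written.
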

\begin{lemma}[Lemma 7.7.5]\label{Hormanderlemma }
Let $K\subset\mathbb{R}^2$ be a compact set and let $X$ be an open subset of $\mathbb{R}^2$ containing $K$. Let $u\in C_c^2(K)$ and $f\in C^4(X)$ where $f$ is a real valued function. Then if there is a point $x_0\in K$ such that $f'(x_0)=0$ and $\det(H_f(x_0))\neq 0$ and $f'(x)\neq 0\ \ ; \forall x\in K\setminus x_0$ then for $\lambda>0$ we have 
\begin{equation}\label{Hormanderlemmaequation}
\bigg|\int_{K}e(\lambda f(x))u(x)dx-\frac{u(x_0)e(\lambda f(x_0))}{\lambda\sqrt{-\det(H_f(x_0))}}\bigg|\leq \frac{C}{\lambda^2}\left(1+|\det f''(x_0)|^{-3}\right)\sum_{j_1+j_2<4}\sup|\partial_1^{j_1}\partial_2^{j_2}u|		
.\end{equation}
\end{lemma}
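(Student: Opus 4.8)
The plan is to localise around the stationary point with a smooth partition of unity and to treat the two resulting regions by different mechanisms: elementary non‑stationary phase away from $x_0$, and the Morse lemma together with an exact quadratic‑phase evaluation near $x_0$.

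First I would write $u=u_0+u_\infty$, where $u_0\in C_c^2$ is supported in a small ball $U_0$ with $\overline{U_0}\subset X$, $x_0\in U_0$, and $U_0$ containing no other critical point of $f$, while $u_\infty=u-u_0$ is supported in $K\setminus\{x_0\}$. On $\mathrm{supp}\,u_\infty$ the gradient $f'$ is bounded away from $0$ by compactness, so the first–order operator $Lg:=\frac{1}{2\pi i\lambda}\sum_{j}\frac{\partial_jf}{|f'|^2}\,\partial_jg$ fixes $e(\lambda f)$ and has $C^1$ coefficients (here $f\in C^4$ is more than enough). One integration by parts gives $\int_K e(\lambda f)u_\infty\,dx=-\frac{1}{2\pi i\lambda}\int_K e(\lambda f)\sum_j\partial_j\!\big(\tfrac{\partial_jf}{|f'|^2}u_\infty\big)dx$, which is $\ll \lambda^{-1}\big(\sup|u|+\sup|\partial_1u|+\sup|\partial_2u|\big)$, already within the claimed bound; equivalently one may quote Lemma \ref{Hormandertheorem} with $k=1$.

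For the $u_0$ piece I would invoke the Morse lemma in its Hessian‑preserving form: since $f'(x_0)=0$, $\det H_f(x_0)\neq0$ and $f\in C^4$, there is a $C^2$ diffeomorphism $\psi$ from a neighbourhood of $0$ onto $U_0$ with $\psi(0)=x_0$, $D\psi(0)=I$, and $f(\psi(y))=f(x_0)+\tfrac12\langle H_f(x_0)y,y\rangle$. Setting $v(y):=u_0(\psi(y))\,|\det D\psi(y)|\in C_c^2$ (so $v(0)=u(x_0)$), the problem reduces to the quadratic‑phase integral $e(\lambda f(x_0))\int_{\mathbb R^2}e\!\big(\tfrac\lambda2\langle By,y\rangle\big)v(y)\,dy$ with $B=H_f(x_0)$. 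Diagonalising $B$ and using $\int_{\mathbb R}e(\tfrac{\lambda b}{2}t^2)\,dt=e(\mathrm{sgn}(b)/8)/\sqrt{\lambda|b|}$ gives the exact identity $\int_{\mathbb R^2}e\!\big(\tfrac\lambda2\langle By,y\rangle+\langle\eta,y\rangle\big)dy=\frac{e(\mathrm{sgn}(B)/8)}{\lambda|\det B|^{1/2}}\,e\!\big(-\tfrac1{2\lambda}\langle B^{-1}\eta,\eta\rangle\big)$ (reading the left side as a regularised oscillatory integral). Inserting the Fourier inversion $v(y)=\int\hat v(\eta)\,e(\langle y,\eta\rangle)\,d\eta$ and interchanging, the $\eta=0$ term yields the main term $\frac{e(\mathrm{sgn}(B)/8)}{\lambda|\det B|^{1/2}}v(0)$, which under the implicit hypothesis $\det H_f(x_0)<0$ (signature zero, so $e(\mathrm{sgn}(B)/8)=1$ and $|\det B|=-\det H_f(x_0)$) equals $\frac{u(x_0)}{\lambda\sqrt{-\det H_f(x_0)}}$; the remainder equals $\frac{e(\mathrm{sgn}(B)/8)}{\lambda|\det B|^{1/2}}\int\hat v(\eta)\big(e(-\tfrac1{2\lambda}\langle B^{-1}\eta,\eta\rangle)-1\big)d\eta$.

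The hard part is bounding this remainder by $O(\lambda^{-1})$ using only the $C^2$‑norm of $v$ — equivalently, of $u$, with only derivatives of order $\le 1$ of $u$ on the right. From $|e(\theta)-1|\le\min(2,2\pi|\theta|)$ one gets a bound $\ll\lambda^{-1}\int|\hat v(\eta)|\min(1,|\eta|^2/\lambda)\,d\eta$, and the crude Sobolev estimate $|\hat v(\eta)|\ll\|v\|_{C^2}(1+|\eta|)^{-2}$ just barely fails to make the tail $\int_{|\eta|>\sqrt\lambda}|\hat v(\eta)|\,d\eta$ converge in two dimensions. The resolution — which is the essential technical content of Lemma 7.7.5 of \cite{Ho} — is Hörmander's device of writing the quadratic‑phase integral exactly as the value at the origin of an exponential Fourier multiplier $\exp\!\big(\tfrac1{4\pi i\lambda}\langle B^{-1}D,D\rangle\big)v$ (with $D=\tfrac1{2\pi i}\partial$) times the constant above, expanding this operator in a finite Taylor series in $\lambda^{-1}$, and bounding the remainder — after a density/regularisation argument — by finitely many derivatives of $v$; alternatively one decomposes the $\eta$‑integral dyadically and exploits the cancellation of $e(-\tfrac1{2\lambda}\langle B^{-1}\eta,\eta\rangle)-1$ against $\hat v$ on each shell. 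Adding the non‑stationary and stationary contributions then gives the stated estimate, and for the full argument I would refer to \cite{Ho}.
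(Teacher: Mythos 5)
The paper does not actually prove this statement: it is quoted (in its $\mathbb{R}^2$ form) directly from H\"ormander \cite{Ho}, so there is no internal argument to compare your sketch against. Your outline --- splitting $u$, killing the non-stationary piece by one integration by parts (equivalently Lemma \ref{Hormandertheorem}), then reducing near $x_0$ via the Morse lemma to an exact quadratic phase and reading off the main term $u(x_0)e(\lambda f(x_0))/(\lambda\sqrt{-\det H_f(x_0)})$ --- is precisely the structure of H\"ormander's own proof, and you correctly note that the normalisation $\sqrt{-\det H_f(x_0)}$ tacitly presupposes a saddle point, $\det H_f(x_0)<0$ with signature zero, which is exactly the situation in the paper's application of the lemma. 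The one step you leave open is the one you flag yourself: the remainder estimate with only derivatives of $u$ of order at most $1$ on the right-hand side (your crude Fourier-inversion bound fails at the tail in two dimensions, and the $C^2$ Morse change of variables coming from $f\in C^4$ also costs regularity of the transplanted amplitude $v$), and you discharge it by citing \cite{Ho} --- which is what the paper does for the entire lemma, so this is acceptable in context, though it makes your write-up a guided reduction to H\"ormander's Lemma rather than a self-contained proof.
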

	\section{Setting up for delta method }\label{S5}
	\subsection{Adjusting the range of $n$:}
	We will consider the limit of integration for the second moment (\ref{FirstDefnofSecondMoment})  to be $[2T,3T]$ instead of $[T,2T]$, which would not affect the end result. We start with the application of the approximate functional equation on $M_F(T)$
	\begin{equation*}
		\begin{split}
			M_F(T)=\int\limits_{2T}^{3T}\bigg|\sum_{n=1}^{\infty}\frac{A(1,n)}{n^{1/2+it}}V_{1/2+it}(n)+\frac{G(\tilde{F},s)}{G(F,s)}\sum_{n=1}^{\infty}\frac{A(n,1)}{n^{1/2-it}}\tilde{V}_{1/2-it}(n)\bigg|^2dt
		\end{split}	
		.\end{equation*} 
	As $\frac{G(\tilde{F},s)}{G(F,s)}\ll 1$ for $\Re(s)=1/2$ and both the sums are essentially the same for $\Re(s)=1/2$, we can just consider the first sum.
	And as we are only considering $t$-aspect here, by Lemma \ref{approximatefunctionaleqn} we may restrict the range to $n\ll t^{3/2+\varepsilon}$ getting
	\begin{equation*}\label{key}
		M_F(T)
		\ll\int\limits_{2T}^{3T}\bigg|\sum_{n\ll t^{3/2+\varepsilon}}\frac{A(1,n)}{n^{1/2+it}}\bigg|^2dt
		.\end{equation*}
	Then, we take a dyadic partition for the sum over $n$ with a smooth function $V_1(x)$ supported in $[1,2]$ and satisfying $V_1^j(x)\ll1$, and arrive at the expression
	\begin{equation}\label{2.1}
		M(T)=\int\limits_{2T}^{3T}\bigg|\sum_{n\in \mathbb{N}}A(1,n)n^{-it}V_1\bigg(\frac{n}{N}\bigg)\bigg|^2dt, \text{ and }M_F(T)\ll \sup_{N\ll T^{3/2+\varepsilon}}N^{-1+\varepsilon}M(T).
	\end{equation} 
    For small values of $N$ ($N\ll T^{15/11+\varepsilon}$), we will use the trivial bound of $M(T)\ll N^{2+\varepsilon}$. Hence, 
    \begin{equation}
		M_F(T)\ll \sup_{T^{15/11+\varepsilon}\ll N\ll T^{3/2+\varepsilon}}N^{-1+\varepsilon}M(T)+T^{15/11+\varepsilon}.
	\end{equation} 
	\subsection{Breaking down the integral}
	We fix a variable $0<X\ll T^{1-\varepsilon}$ here, whose optimal value would be chosen later. We choose a non-negative smooth function $V(x)$ supported in $[-1/2,3/2]$ and $1$ in the range $[0,1]$ to split the integral of (\ref{2.1}) into small integrals of the size $\xi$ where $X\leq \xi\leq 2X$.
   Then, we take an average over $\xi$ with the help of a non-negative smooth function $V_2$ supported in $[1,2]$ satisfying $V_2^j(x)\ll 1$ and $\int_xV_2(x)=1$.  and where we vary $\xi$ continuously in the range $[X,2X]$. Let $\mathbbm 1_S$ be the indicator function of $S$. Then, 
    $$\mathbbm 1_{2T\leq t\leq 3T}\leq \frac{1}{X}\sum_{\lfloor\frac{T}{2X}\rfloor\leq r\leq\lfloor\frac{2T}{X}\rfloor} \int\limits_\br V\left(\frac{t-T-r\xi}{\xi}\right) V_2\left(\frac{\xi}{X}\right),$$
    for $t\in\br$. We summarize the splitting of the integral in the following lemma.
    \begin{lemma}
    Let $M(T)$ be as in (\ref{2.1}) and let $V(x)$ be a non-negative smooth function supported in $[-1/2,3/2]$ and $1$ in the range $[0,1]$, satisfying $V^j(x)\ll 1$ and $V_2(x)$ be a non-negative smooth function supported in $[1,2]$ satisfying $V_2^j(x)\ll 1$ and $\int_\br V_2(x)dx=1$. Then,  
    \begin{equation*}\label{2.3}
		M(T)\ll X^{-1}\sum_{T/2X\leq r\leq 2T/X}	\mathcal{M}(r,T),
	\end{equation*}
	where 	\begin{equation}\label{2.4}
		\mathcal{M}(r,T):=\int\limits_{\mathbb{R}}\int\limits_{\mathbb{R}}\bigg|\sum_{n}A(1,n)n^{-it}V_1\bigg(\frac{n}{N}\bigg)\bigg|^2V\bigg(\frac{t-T-r\xi}{\xi}\bigg)V_2\bigg(\frac{\xi}{X}\bigg)dtd\xi.
	\end{equation}
    \end{lemma}
	\begin{remark}$M_F(T)\ll N^{-1}M(T)\ll (NX)^{-1}\sum_{r\sim T/X}\mathcal{M}(r,T)$\end{remark}
	\subsection{$t$- integral}
	\begin{lemma}\label{tintegrallemma} Let $H:=\frac{N}{X}$and $X\gg T^{1/2+\varepsilon}$. For some smooth function $U(x,y)$ of two variables supported in $[-H T^{\varepsilon}, HT^{\varepsilon}]\times [N,2N]$ and satisfying 
    $$x^iy^j\frac{\partial^i\partial ^j }{\partial x^i\partial y^j}U(x,y)\ll_{i,j,\varepsilon} \left(\frac{x}{H}\right)^iT^{(i+j)\varepsilon},$$
    we have 
    \begin{align}
        \mathcal{M}(r,T)\asymp X\cdot \mathfrak{M}(r)+O(T^{-A}),\label{2.11}
    \end{align}
    where 
    \begin{equation}\label{2.13}
\mathfrak{M}(r):=\int\limits_{\mathbb{R}}\sum_{h}\sum_{n}A(1,n)\overline{A(1,n+h)}e\bigg(\frac{(T+r\xi)}{2\pi}\times \frac{h}{n}\bigg)U(h,n)V_1\left(\frac{n}{N}\right)V_2\bigg(\frac{\xi}{X}\bigg)d\xi
		.\end{equation}
	\end{lemma}
	\begin{proof}
		We open the absolute value square in (\ref{2.4}) and denote the copy of the $n$ variable by $m$. Further, we may write $m=n+h$. Then, we perform a change of variable $u:=(t-T-r\xi)/\xi$ on the $t$-integral and get  
		\begin{equation*}\label{key}
			\begin{split}
				\mathcal{M}(r,T)=&\int\limits_{\mathbb{R}}\xi\sum_{h}\sum_{n}A(1,n)\overline{A(1,n+h)}\bigg(1+\frac{h}{n}\bigg)^{i(T+r\xi)}\\&\times \left[\int_{\mathbb{R}}\bigg(1+\frac{h}{n}\bigg)^{iu\xi}V(u)du\right]V_1\bigg(\frac{n}{N}\bigg)V_1\bigg(\frac{n+h}{N}\bigg)V_2\bigg(\frac{\xi}{X}\bigg)d\xi.
			\end{split}
		\end{equation*}
        By repeated integration by parts (Lemma \ref{repeatedint}) , we observe that the $u$ integral is negligibly small unless 
        $$|X\log (1+h/n)|\ll T^{\varepsilon}\implies |h|\ll \frac{NT^{\varepsilon}}{X}.$$
We denote $H=\frac{N}{X}$. Now, 
Let $V_3(u)$ be an even smooth function supported in $[-3/2,3/2]$ with value $1$ in $-[1,1]$ satisfying $V_3^{j}(y)\ll 1$. Then, if we define $$U(h,n):=\left[\int_{\mathbb{R}}\big(1+\frac{h}{n}\big)^{iu\xi}V(u)du\right] V_3\left(\frac{h}{HT^{\varepsilon}}\right)V_1\bigg(\frac{n+h}{N}\bigg)$$ 
        it satisfies 
        $$x^iy^j\frac{\partial^i\partial ^j }{\partial x^i\partial y^j}U(x,y)\ll_{i,j,\varepsilon} \left(\frac{x}{H}\right)^iT^{(i+j)\varepsilon}.$$
$V_3(h/HT^{\varepsilon})$ restricts the support of $x$ of $U(x,y)$ in $[-HT^{\varepsilon},HT^{\varepsilon}]$ with negligible error term.  

The exponential term $(1+h/n)^{i(T+r\xi)}$ has the phase function of the form $(2\pi)^{-1}(T+r\xi)\log(1+h/n)$. Upon expressing $\log(1+h/n)$ in terms of the Taylor series, we note that the error term $$(T+r\xi)[\log(1+h/n)-h/n]\ll \frac{hT^{\varepsilon}}{H}$$ provided 
		$$T\frac{h^2}{n^2}\ll \frac{hT^{\varepsilon}}{H}\impliedby  T\cdot \frac{NT^{\varepsilon}}{NX}\ll X\iff X\gg T^{1/2+\varepsilon}.$$
		Thus, assuming $X\gg T^{1/2+\varepsilon}$ (though at a later stage, this condition would be proven to be a necessary one), we can drop the higher order terms of the Taylor series by absorbing them into the smooth function $U(h,n)$ and only use the term $e\left(\frac{(T+r\xi)}{2\pi}\times \frac{h}{n}\right).$ Finally, absorbing $\frac{\xi}{X}$ into the smooth function $V_2(\xi/X)$, we conclude the proof of the lemma. 
	\end{proof}    
	In the next section, we will apply the $\delta$ method of Duke, Friedlander, and Iwaniec to separate the oscillations in $\mathfrak{M}$ (\ref{2.13}). For brevity, we will write $\mathfrak{M}$ instead of $\mathfrak{M}(r)$.
	
	\section{Application of the delta method}\label{S6}
	In order to separate the oscillations in the expression of $\mathfrak{M}$ in (\ref{2.13}), we will employ the circle method, specifically the $\delta$-method of Duke, Friedlander, and Iwaniec (Chapter $20$ of \cite{IK}). Here $\delta$ symbol represents the function
	\begin{equation*}\label{key}
		\delta(n):\mathbb{Z}\rightarrow\{0,1\},\ \  \text{such that }\ \ \delta(n)=\begin{cases}
			0 \text{ if } n\neq 0\\
			1\text{ if } n=0
		\end{cases}
		.\end{equation*}
	We will use the expression of $\delta(n)$ mentioned in \cite{Mu2}. We summarize the expression and a few of its properties in the following lemma.
	\begin{lemma}\label{delta}
		\begin{equation*}
			\delta(n)=\frac{1}{Q}\sum_{1\leq q\leq Q}\frac{1}{q}\ \sideset{}{^*}\sum_{a\bmod q}e\bigg(\frac{an}{q}\bigg)\int\limits_{-\infty}^{\infty}g(q,x)e\bigg(\frac{nx}{qQ}\bigg)dx,
		\end{equation*} 
		where the sum over $a$ is over the reduced residue class of $q$ (signified by $*$)  and for any $\alpha>1$, $g(q,x)$ satisfies
		\begin{align}
				&	g(q,x)=1+O\bigg(\frac{Q}{q}\bigg(\frac{q}{Q}+|x|\bigg)^{\alpha}\bigg),\ \ \ g(q,x)\ll|x|^{-\alpha},\label{gqxshape}\\
                & x^j\frac{\partial ^j}{\partial x^j}g(q,x)\ll \log Q\min\left\{\frac{Q}{q},\frac{1}{|x|}\right\},\label{gqxderivative}\\
                & \int_{\br} (|g(q,x)|+|g(q,x)|^2)dx\ll Q^{\varepsilon}.\label{gqxintegral}
			\end{align}
	\end{lemma}
	In $\mathfrak{M}$ (\ref{2.13}) we introduce an additional sum over $m$ accompanied with $\delta(m-n-h)W\big(\frac{m-n-h}{Q_0^2}\big)$. Here $\delta(m-n-h)$ represents the condition $m=n+h$ and $W$ is an even smooth function supported in $[-1,1]$ with $W(0)=1$
	. Then, we apply (\ref{delta}) with modulus $Q_0=\sqrt{N/(T/X)}$. We introduce a dyadic partition of unity in the $q$ sum and separate the $n$ sum and $m$ sum to get 
	\begin{equation}\label{2.17}
		\begin{split}
			\mathfrak{M}\ll&\frac{T^{\varepsilon}}{Q_0}~\sup_{Q\ll Q_0}~\sum_{q\sim Q}~\frac{1}{q}~\ \sideset{}{^*}\sum_{a\bmod q}~\int_{\mathbb{R}}V\bigg(\frac{\xi}{X}\bigg)~\sum_{h}e_q(-ah)\\
			&\times\bigg[\sum_{n}\bar A(1,n)e_q(-an)e\bigg(\frac{(T+r\xi)}{2\pi}\times \frac{h}{n}\bigg)U(h,n)V\left(\frac{n}{N}\right)\bigg]\\
		&\times\bigg[\sum_{m}A(1,m)e_q(am)\int_{\mathbb{R}}g(q,x)e\bigg(\frac{(m-n-h)x}{qQ_0}\bigg)W\bigg(\frac{m-n-h}{Q_0^2}\bigg)dx\bigg]\ d\xi.
		\end{split}
	\end{equation}
	\begin{remark}
	    From now on, $V(x)$ is any smooth function supported in $[1,2]$ satisfying $V^j(x)\ll 1$. 
	\end{remark}
	
	\section{Application of the Voronoi type Summation Formula}\label{S7}
\subsection{Voronoi summation in the $m$-sum}
We will apply the Voronoi summation formula to the $m$-sum
\begin{equation*}	\begin{split}&S_\cM:=\sum_{m}A(1,m)e_q(am)\int_{\mathbb{R}}g(q,x)e\bigg(\frac{(m-n-h)x}{qQ_0}\bigg)W\bigg(\frac{m-n-h}{Q_0^2}\bigg)dx.
	\end{split}
\end{equation*}
Using the expression of (\ref{estimateofpsi}), we evaluate 
\begin{align}
    &\Psi_0\left(\frac{m_1^2m_2}{q^3}\right)\asymp \
    \left(\frac{m_1^2m_2}{q^3}\right)^{2/3}\sum_{\pm}c_{\pm}I_{1,\pm} \label{Psi07.5},\\ \text{where }&I_{1,\pm
    }=\int_{-\infty}^{\infty}g(q,x)\int_0^\infty  u^{-1/3} e\left(\frac{x(u-n-h)}{qQ_0}\pm\frac{3(m_2m_1^2u)^{1/3}}{q}\right)\nonumber\\&\hspace{7cm}\times W\left(\frac{u-n-h}{Q_0^2}\right)dudx.
\end{align}
We derive 
\begin{lemma}\label{L7.3}
When $m_2m_1^2\ll \frac{N^{2+\varepsilon}}{Q_0^3}$,
\begin{equation*}\label{key}
\Psi_0\bigg(\frac{m_2m_1^2}{q^3}\bigg)\asymp\bigg(\frac{m_2m_1^2}{q^3}\bigg)^{2/3}\sum_{\pm}c_{\pm} \frac{Q_0q}{N^{1/3}}e\bigg(\pm \frac{3(m_2m_1^2)^{1/3}(n^{1/3}+\frac{h}{3n^{2/3}})^{1/3}}{q}\bigg)\int_x g(q,x),
\end{equation*}
with negligible error term,  and $\Psi_0\left(\frac{m_2m_1^2}{q^3}\right)$ is negligibly small otherwise. 
\end{lemma}
\begin{proof}
We start with a change of variable  $u=qQ_0y+n+h$ in the integral $I_{\pm}$  in (\ref{Psi07.5})
\begin{align}
    I_{1,\pm}= &qQ_0\int_{-\infty}^\infty  (n+h+qQ_0y)^{-1/3}\nonumber e\left(\pm\frac{3(m_2m_1^2)^{1/3}(n+h+qQ_0y_1)^{1/3}}{q}\right)\\&\times \int_{\br}g(q,x)e(xy)W\left(\frac{y}{Q_0/q}\right)dx dy.\nonumber
\end{align}
When $q\gg Q_0^{1-\varepsilon}$, by $W\left(\frac{y}{Q_0/q}\right)$, we get $y\ll T^{\varepsilon}$. When $q\ll Q_0^{1-\varepsilon}$, by (\ref{gqxshape}), $g(q,x)$ can be approximated by $1$ with $x$ supported in $[-T^{\varepsilon}, T^{\varepsilon}]$, with a negligible error term. Hence, the $x$ integral would be negligibly small unless $y\ll {T^{\varepsilon}}$. Thus, for any value of $q$, we get $y\ll T^{\varepsilon}$ and in that range $e(xy)$ can be absorbed into the smooth functions.

At this point, by repeated integration by parts (Lemma \ref{repeatedint}), we observe that the $y$ integral is negligibly small unless $m_1^2m_2\ll \frac{N^{2+\varepsilon}}{Q_0^3}$. Then, by the Taylor series expansion of $\left(1+\frac{qQ_0y}{n+h}\right)^{1/3}$ we expand 
the above exponential term:
\begin{align*}
    \frac{3(m_2m_1^2)^{1/3}(n+h)^{1/3}}{q}+\frac{(m_2m_1^2)^{1/3}Q_0qy}{q(n+h)^{2/3}}+\cdots 
\end{align*}
When $m_1^2m_2\ll \frac{N^{2+\varepsilon}}{Q_0^3}$, all but the first term along with the $y$ integral  can be absorbed into the smooth function $U(h,n)$ and we arrive at 
$$I_{1,\pm}\asymp \frac{qQ_0}{N^{1/3}}
 e\left(\pm \frac{3(m_2m_1^2)^{1/3}(n+h)^{1/3}}{q}\right)\int_{\br}g(q,x) V(\dots )dx+O(T^{-A}).$$
Now, we also expand $(1+h/n)^{1/3}$ by its Taylor series expansion
$$\left(1+\frac{h}{n}\right)^{1/3}=1+\frac{h}{3n}-\frac{h}{9n^2}+\cdots.$$
As
$$X\gg \sqrt{T}\text{ and } m_1^2m_2\ll \frac{N^{2+\varepsilon}}{Q_0^3}\implies \frac{(m_1^2m_2)^{1/3}n^{1/3}h^2}{n^{2}q}\ll \frac{hT^{\varepsilon}}{H},$$
all but the first two terms corresponding to the above Taylor series expansion can be absorbed into the smooth function $U(h,n)$ and we arrive at our final expression.
\end{proof}

Thus, the sum over $m$ transforms into 
\begin{align}
        S_\cM\asymp \frac{Q_0}{N^{1/3}}\sum_{\pm}\underset{m_1^2m_2\ll \frac{N^{2+\varepsilon}}{Q_0^3}}{\sum_{m_1|q}\sum_{m_2>0}}&\frac{A(m_2,m_1)\cdot m_1^{1/3}}{m_2^{1/3}}S(\bar a, \pm m_2, qm_1^{-1})\nonumber\\&\times e\left(\pm \frac{3(m_2m_1^2)^{1/3}(n^{1/3}+\frac{h}{3n^{2/3}})}{q}\right)\int g(q,x)dx +O(T^{-A}).\label{msumaftervoronoi}
    \end{align}
\subsection{Voronoi summation formula on the $n$-sum}
After computing the $m$ sum, the $n$ sum for a fixed $m=m_1^2m_2$ becomes
\begin{equation}\label{2.23}
\begin{split}
&S_\cN:=\sum_{n}A(1,n)e_q(an)e\bigg(\frac{(T+r\xi)}{2\pi}\times \frac{h}{n}\pm \frac{3(m_1^2m_2)^{1/3}(n^{1/3}+\frac{h}{3n^{2/3}})}{q}\bigg)U(h,n)V\left(\frac{n}{N}\right).
\end{split}
\end{equation}
Similar to the $m$ sum, we will apply the Voronoi summation formula to the $n$-sum and only consider the part corresponding to $\Psi_0$. For that, we use (\ref{estimateofpsi}) with $x=n_2n_1^2/q^3$. Thus, 
$$\Psi_0\bigg(\frac{n_1^2n_2}{q^3}\bigg)\asymp\bigg(\frac{n_1^2n_2}{q^3}\bigg)^{2/3}\cdot I_2,$$
where the oscillatory integral $I_2$ is 
\begin{equation}\label{Iintegralnsum}
\begin{split}
I_2:=\int\limits_{0}^{\infty}e\bigg(&\frac{(T+r\xi)}{2\pi}\times \frac{h}{u}\pm \frac{(m_1^2m_2)^{1/3}h}{qu^{2/3}}\\&+\frac{3(\pm (m_1^2m_2)^{1/3}\pm(n_1^2n_2)^{1/3})u^{1/3}}{q}\bigg)\frac{1}{u^{1/3}}U(h,u)V(u/N)du.
\end{split}
\end{equation}
Note that all the choices of $\pm$s are allowed, but we fix one to begin with. As $(m_1^2m_2)\ll \frac{N^{2+\varepsilon}}{Q_0^3}$, by repeated integration by parts (Lemma \ref{repeatedint}), this integral is negligibly small unless $(n_1^2n_2)\ll \frac{N^{2+\varepsilon}}{Q_0^3}$. With $I_2$ as defined in (\ref{Iintegralnsum}), the $n$ sum (\ref{2.23}) transforms into 
\begin{equation}\label{nsumaftervoronoi}
\begin{split}
S_\cN\asymp \frac{1}{q}\sum_{\pm}\underset{n_1^2n_2\ll \frac{N^{2+\varepsilon}}{Q_0^3}}{\sum_{n_1|q}n_1^{1/3}\sum_{n_2>0}}\frac{A(n_1,n_2)}{n_2^{1/3}}S(\bar a,\pm n_2;qn_1^{-1})\cdot I_2+O(T^{-A})
\end{split}
\end{equation}

\subsection{Poisson summation formula on the $h$ sum}
Now, we evaluate the following $h$ sum:
\begin{align}
S_\cH=&\sum_{h}e_q(-ah) I_2(h)\label{Shinitial}
\end{align}
where, $I_2$ is the integral (\ref{Iintegralnsum}) and we have written $I_2(h)$ to show the dependency on $h$. Applying the Poisson Summation formula on $S_\cH$, we get  
\begin{lemma}\label{Shfinallemma}
\begin{align}
    S_\cH\asymp \frac{N^{5/3}}{T} \sum_{\substack{h\sim \frac{qT}{N}\\h\equiv a\bmod q}} e\bigg(\frac{3(T+r\xi)^{1/3}( \pm(n_1^2n_2)^{1/3}\pm(m_1^2m_2)^{1/3} )}{(2\pi)^{1/3}q^{2/3}h^{1/3}}\bigg) V(\cdots )+O(T^{-A}).\label{Shfinal}
\end{align}
\end{lemma}
\begin{proof}
Application of the Poisson summation formula (\ref{Poissonadditive}) (and denoting the dual variable by $h$ too ) on $S_\cH$ (\ref{Shinitial}) gives us 
\begin{equation*}\label{key}
\begin{split}
S_\cH&=\sum_{h\equiv a\bmod q}\int_\br I_2(y)e(-hy/q)dy= \sum_{h\equiv a\bmod q} I_3,
\end{split}
\end{equation*}
where the double integral $I_3$ is given by 
\begin{align}
    I_3:=\int\limits_{0}^{\infty}e\bigg(\frac{3(\pm (m_1^2m_2)^{1/3}\pm(n_1^2n_2)^{1/3})u^{1/3}}{q}\bigg)\frac{1}{u^{1/3}}V(u/N)\cdot I_{h,2}\ du,\label{I3hsum}
\end{align}
where, after a change of variable $y\mapsto \frac{yu}{X}$, we have   
\begin{align}
    I_{h,2}=\frac{u}{X}\int_\br e\bigg(\frac{(T+r\xi)}{2\pi}\times \frac{y}{X}\pm \frac{(m_1^2m_2)^{1/3}yu^{1/3}}{qX}\bigg) e\left(-\frac{hyu}{qX}\right)U\left(\frac{yu}{X},u\right)dy.
\end{align}
As $(a,q)=1$ and $h\equiv a\bmod q$, the contribution of $S_h$ would be negligibly small unless $|h|\geq 1$. As $u\sim N$, $(m_1^2m_2)^{1/3}\ll \frac{N^{2/3}T^{\varepsilon}}{Q_0}$, and $\frac{\partial ^j}{\partial y^j}U\left(\frac{yu}{X},u\right)\ll \left(\frac{uT^{\varepsilon}}{XH}\right)^j\ll T^{j\varepsilon},$  by repeated integration by parts (Lemma \ref{repeatedint}), the $y$ integral is negligibly small unless $h\sim \frac{qT}{N}$. With $h$ free in this range, we get that the integral is negligibly small unless
\begin{align}
 &\left|\frac{(T+r\xi)q}{2\pi h}\pm \frac{(m_1^2m_2)^{1/3}u^{1/3}}{h}-u\right|\ll T^{2\varepsilon}\frac{qX}{h}\ll \frac{NT^{2\varepsilon}}{T/X}.\nonumber
\end{align}
As $|h|\geq 1$,  $\frac{(m_1^2m_2)^{1/3}u^{1/3}}{h}\ll \frac{N}{Q_0}\ll \frac{N}{T/X}$ ( provided  $Q_0>T/X$, which we will verify with our final choice of $Q_0$). So, we can drop this term from further consideration. Now, we change the variable $u$ to $u_1$, where  $u_1:=u-\frac{(T+r\xi)q}{2\pi h}$ and we have $|u_1|\ll\frac{NT^{2\varepsilon}}{T/X}$. 
In that range of $u_1$, $I_{h,2}$ can be assumed to be a smooth function with absolute value bounded by $\frac{N}{X}$. Consequently, we can write $I_3$ as 
\begin{align}
    I_3\asymp \frac{N^{2/3}}{X}\int_{u_1\ll \frac{NT^{\varepsilon}X}{T}} e\bigg(\frac{3(\pm (m_1^2m_2)^{1/3}\pm(n_1^2n_2)^{1/3})(u_1+\frac{(T+r\xi)q}{2\pi h})^{1/3}}{q}\bigg)du_1+O(T^{-A}).\nonumber
\end{align}
By repeated integration by parts (Lemma \ref{repeatedint}), the integral $I_3$ is negligibly small unless 
$$|\pm (m_1^2m_2)^{1/3}\pm(n_1^2n_2)^{1/3}|\ll \frac{qT^{\varepsilon}T}{N^{1/3}X}.$$
Then, in the Taylor series expansion of $\left(1+\frac{u_1}{\frac{(T+r\xi)q}{2\pi h}}\right)^{1/3}$, the second and higher-order terms can be incorporated into the smooth functions as
$$\frac{3(T+r\xi)^{1/3}( \pm(m_1^2m_2)^{1/3} \pm(n_1^2n_2)^{1/3})}{(2\pi)^{1/3}q^{2/3}h^{1/3}}\cdot \frac{u_1}{\frac{3(T+r\xi)q}{2\pi h}}\ll \frac{N^{1/3}}{q}\cdot  \frac{qT^{\varepsilon}T}{N^{1/3}X}\cdot \frac{X}{T}\ll T^{\varepsilon} .$$
So, we only consider the first term of the Taylor series expansion and finally arrive at 
\begin{align}
    I_3\asymp \frac{N^{2/3}T^{\varepsilon}}{X}\cdot \frac{NX}{T}\cdot  e\left(\frac{3(T+r\xi)^{1/3}( \pm(m_1^2m_2)^{1/3} \pm(n_1^2n_2)^{1/3})}{(2\pi)^{1/3}q^{2/3}h^{1/3}}\right) V(\cdots )+O(T^{-A}).
\end{align}
Putting it back into $S_\cH$ (\ref{Shinitial}), we conclude the proof of the lemma.
\end{proof}

\subsection{Final Equation After Voronoi}
We put (\ref{msumaftervoronoi}), (\ref{nsumaftervoronoi}), (\ref{Shfinal}) into (\ref{2.17}) and arrive at the expression 
\begin{equation}
\begin{split}
\mathfrak{M}\ll& \frac{T^{\varepsilon}}{Q_0}\cdot \frac{Q_0}{N^{1/3}}\cdot \frac{N^{5/3}}{T}\sup_{Q\ll Q_0}\frac{1}{Q^2}\sum_{q\sim Q} \int_{\xi\sim X}\sum_{h\sim\frac{qT}{N}} \int g(q,x)dx\\
&\underset{m_1^2m_2\ll \frac{N^{2+\varepsilon}}{Q_0^3}}{\sum_{m_1|q}\sum_{m_2>0}}\frac{A(m_2,m_1)\cdot m_1^{1/3}}{m_2^{1/3}}S(\bar a,\pm  m_2, qm_1^{-1})e\bigg(\frac{3(T+r\xi)^{1/3}( \pm(m_1^2m_2)^{1/3} )}{(2\pi)^{1/3}q^{2/3}h^{1/3}}\bigg)\\
&\underset{n_1^2n_2\ll \frac{N^{2+\varepsilon}}{Q_0^3}}{\sum_{n_1|q}\sum_{n_2>0}}\frac{A(n_1,n_2)n_1^{1/3}}{n_2^{1/3}}S(\bar a,\pm n_2;qn_1^{-1})e\bigg(\frac{3(T+r\xi)^{1/3}( \pm(n_1^2n_2)^{1/3})}{(2\pi)^{1/3}q^{2/3}h^{1/3}}\bigg)
\end{split}
.\end{equation}
Now, we evaluate the $x$ integral by (\ref{gqxintegral}), we take a dyadic partition of both the $m_1^2m_2$ sum and the $n_1^2n_2$ sum and then by Cauchy's inequality and the symmetry of the $n_1,n_2$ and $m_1,m_2$ sum, we get 
\begin{align}
    \mathfrak{M}\ll& \frac{N^{4/3}T^{\varepsilon}}{T}\sup_{Q\ll Q_0}\frac{1}{Q^2}\sup_{M_0\ll \frac{N^{2+\varepsilon}}{Q_0^3}}M_0^{-2/3}\sum_{q\sim Q} \int_{\xi\sim X}\sum_{h\sim\frac{qT}{N}} \nonumber \\
    &\times \Bigg|\underset{m_1^2m_2\sim M_0}{\sum_{m_1|q}\sum_{m_2>0}}A(m_2,m_1)\cdot m_1\cdot S(\bar h, m_2, qm_1^{-1})e\bigg(\frac{3(T+r\xi)^{1/3} (m_1^2m_2)^{1/3} }{(2\pi)^{1/3}q^{2/3}h^{1/3}}\bigg)\Bigg|^2\label{lastequationbeforelargesieve}
\end{align}
Here, we have only considered the $+$ sign case as the analysis of the other cases is exactly similar.  
\section{Duality principle of the large sieve}\label{S8}
To interchange the order of summations in the above expression, we will use the duality principle of the large sieve. We restate the duality principle stated in Chapter 7.1 of \cite{IK}. 
\begin{lemma}\label{largesieveduality}
For $1\leq m\leq M$ and $1\leq n\leq N$ let $\alpha_m,\beta_n\in \mathbb{C}$ and $(\phi(m,n))_{\underset{1\leq n\leq N}{1\leq m\leq M}}$  be a complex matrix . Then, if we have 
$$\sum_{n}\big|\sum_{m}\alpha_m\phi(m,n)\big|^2\ll\Delta||\alpha||^2,$$
for the same $\Delta$ we would have 
$$\sum_{m}\big|\sum_{n}\beta_n\phi(m,n)\big|^2\ll\Delta||\beta||^2.$$	
\end{lemma}
Using the duality principle and the Ramanujan type bound of average (\ref{ramanujanonaverage}), we get 
\begin{lemma}\label{Largesievemain}
$$\mathfrak{M} \ll\frac{N^{4/3}T^{\varepsilon}}{T}\sup_{Q\ll Q_0}\frac{1}{Q^2}\sup_{M_0\ll \frac{N^{2+\varepsilon}}{Q_0^3}}M_0^{1/3}\times \Delta,$$
where
\begin{equation}\label{largesievedualityDelta}
\begin{split}
\Delta=\underset{||\alpha||^2=1}{\sup\limits_{\alpha}}\underset{m_1^2m_2\sim M_0}{\sum_{m_1>0}\sum_{m_2>0}}\bigg|&\int\limits_{\xi\sim X}\sum_{\substack{q\sim Q\\m_1|q}}\sum_{h\sim \frac{qT}{N}}\alpha(\xi,q,h)S(\bar h,m_2;qm_1^{-1})m_1\\&\times e\bigg(\frac{3(T+r\xi)^{1/3} m_1^{2/3}m_2^{1/3}}{(2\pi)^{1/3}q^{2/3}h^{1/3}}\bigg)d\xi\bigg|^2.
\end{split}
\end{equation}
where $\alpha(\xi,q,h)$ varies over all the complex vectors such that $\int_{\xi\sim X}\sum_{q\sim Q}\sum_{h\sim \frac{qT}{N}}|\alpha|^2=1$. 
\end{lemma}	

\subsection{After large sieve}
At this point, we aim to achieve a non-trivial upper bound for $\Delta$. First of all, as $m_1|q$, everywhere in the expression, we will replace $q$ by $m_1q$ where the new $q$ is in the range $q\sim Q/m_1$. Now, we open up the absolute square in (\ref{largesievedualityDelta}) and denote the $m_2$ sum as $S_{\cM_2}$. Thus, we get 
\begin{equation}
\begin{split}
\Delta=\sup_{||\alpha||_2=1}&\sum_{m_1\ll Q}m_1^2\int_{\xi_1\sim X}\sum_{q_1\sim Q/m_1}\sum_{h_1\sim\frac{QT}{N}}\int_{\xi_2\sim X}\sum_{q_2\sim Q/m_1}\sum_{h_2\sim\frac{QT}{N}}\alpha_1\bar\alpha_2\times S_{\cM_2},\\  \text{where }S_{\cM_2}=&\sum_{m_2\sim\frac{M_0}{m_1^2}}S(\bar h_1,m_2;q_1)S(\bar h_2,m_2;q_2)\\&\times  e\bigg(\frac{3(T+r\xi_2)^{1/3} (m_1^2m_2)^{1/3}}{(2\pi)^{1/3}(q_2m_1)^{2/3}h_2^{1/3}}-\frac{3(T+r\xi_1)^{1/3} (m_1^2m_2)^{1/3}}{(2\pi)^{1/3}(q_1m_1)^{2/3}h_1^{1/3}}\bigg).\label{SM2firstdef}
\end{split}
\end{equation}
We note that, for $i=1,2$, $\alpha(\xi_i,q_im_1, h_i)$ is written as $\alpha_i$ in short. Let us define $$\Xi(\xi,q,h)=\frac{(T+r\xi)^{1/3} }{(2\pi)^{1/3}q^{2/3}h^{1/3}}$$ and for brevity, we will often write $\Xi_1=\Xi(\xi_1,q_1m_1,h_1)$ or $\Xi_2=\Xi(\xi_2,q_2m_2,h_2)$.We note that 
$$|\Xi_2-\Xi_1|\ll\frac{T^{1/3}}{Q^{2/3}\cdot (QT/N)^{1/3}}\sim \frac{N^{1/3}}{Q}.$$
Then, we introduce a smooth dyadic partition with smooth functions of the form $V\left(\frac{\Xi_2-\Xi_1}{X_1}\right)$ where $T^{-A}\ll |X_1|\ll \frac{N^{1/3}}{Q}$. So, 
\begin{align}\Delta\ll T^{\varepsilon}\sup_{||\alpha||_2=1}\sup_{\frac{1}{T^A}\ll X_1\ll \frac{N^{1/3}}{Q}}\Bigg|&\sum_{m_1\ll Q}m_1^2\int\limits_{\xi_1\sim X}\sum_{q_1\sim \frac{Q}{m_1}}\sum_{h_1\sim\frac{QT}{N}}\alpha_1\nonumber \\&\int\limits_{\xi_2\sim X}\sum_{q_2\sim \frac{Q}{m_1}}\sum_{h_2\sim\frac{QT}{N}}\bar\alpha_2 V\left(\frac{\Xi_2-\Xi_1}{X_1}\right) S_{\cM_2}\Bigg|.\label{DeltaFirstModification}\end{align}
Here, we only deal with $X_1>0$ as the case of the negative $X_1$ is symmetrical and the contribution from $|\Xi_2-\Xi_1|\ll T^{-A}$ is negligibly small. 
\subsection{Poisson summation formula on the $m$ sum}
Now, we intend to apply the Poisson Summation formula on $S_{\cM_2}$ (\ref{SM2firstdef}). We note that $$S(\bar h_1,m_2;q_1)S(\bar h_2,m_2;q_2)$$
is of modulus $q_1q_2$. Applying Poisson summation formula (\ref{Poissonsum}) on the $S_{\cM_2}$ sum, we get 
\begin{align*}
    S_{\cM_2}=\frac{1}{q_1q_2}\sum_{m\in \mathbb{Z}}&\left(\sum_{\beta\bmod q_1q_2}S(\bar h_1,\beta;q_1)S(\bar h_2,\beta;q_2)e_{q_1q_2}(m\beta)\right)\\
   & \times \int_{y\sim \frac{M_0}{m_1^2}}e\left(3(\Xi_2-\Xi_1)m_1^{2/3}y^{1/3}\right)e(-my/q_1q_2)dy.
\end{align*}
We make a change of variable $y\mapsto \frac{M_0y}{m_1^2}$ and arrive at 
\begin{equation}\label{CharacterSum}
\begin{split}
S_{\cM_2}=&\frac{M_0}{m_1^2q_1q_2}\sum_{m\in\mathbb{Z}}\mathfrak{C}\ \mathfrak{J},\\
\text{where}~~~& \\
\mathfrak{C}=&\sum_{\beta\bmod q_1q_2}S(\bar h_1,\beta;q_1)S(\bar h_2,\beta;q_2)e_{q_1q_2}(m\beta),\\
\mathfrak{J}=&\int_{y\sim 1}e\left(3(\Xi_2-\Xi_1)M_0^{1/3}y^{1/3}\right)\times e\bigg(-\frac{mM_0y}{m_1^2q_1q_2}\bigg)dy.
\end{split}
\end{equation}
Depending on $m$, we will divide the analysis of the above expression into two separate cases: ``Diagonal" when $m=0$ and ``Off-Diagonal" when $m\neq0$. 
\section{Diagonal}\label{S9}
In this case, we have $m=0$. At first, we will calculate the contribution of the character sum, which is 
\begin{equation}\label{dia}
\begin{split}
\mathfrak{C}&=\sum_{\beta\bmod q_1q_2}S(\bar h_1,\beta;q_1)S(\bar h_2,\beta;q_2)=q_1q_2\underset{xq_2+yq_1\equiv 0\bmod q_1q_2}{\sideset{}{^*}\sum_{x\bmod q_1}\ \sideset{}{^*}\sum_{y\bmod q_2}}e\bigg(\frac{\bar{h_1}\bar{x}}{q_1}\bigg)e\bigg(\frac{\bar{h_2}\bar{y}}{q_2}\bigg).
\end{split}
\end{equation}
In order to have a solution, we must have $q_2|yq_1$, which would imply $q_2|q_1$. Similarly, we get that $q_1|q_2$. So we would have $q_1=q_2$. In that case, the character sum becomes
\begin{equation*}\label{key}
\mathfrak{C}=q_1^2\sideset{}{^*}\sum_{x\bmod q_1}e\bigg(\frac{(h_1-h_2)x}{q_1}\bigg)=q_1^2\mathfrak {c}_{q_1}(h_1-h_2)\delta_{q_1=q_2},
\end{equation*} 
where $\mathfrak c_{q_1}$ is the Ramanujan sum modulo $q_1$. Only when $h_1\equiv h_2\bmod{q_1}$, we have $\mathfrak{c}_{q_1}({h_1}-{h_2})\ll q_1$. In any other case, we may write $\mathfrak{c}_{q_1}({h_1}-{h_2})\ll (h_1-h_2,q_1)$. From the above arguments on the character sum, we can deduce that
\[\sum_{h_1}c_{q_1}({h_1}-{h_2})\ll \begin{cases}
&\ll \frac{Q^{1+\varepsilon}}{m_1}\text{ when }m_1\ll T^{1/2}\\
&\ll \frac{Q^{1+\varepsilon}T}{N}\text{ when } T^{1/2}\ll m_1\ll Q
\end{cases}\]

We recall $\Xi_k=\frac{(T+r\xi_k)^{1/3} }{(2\pi)^{1/3}q_k^{2/3}h_k^{1/3}}$ for $k=1,2$. So, we note that the integral \begin{equation*}\label{key}
\mathfrak{J}=\int_{y\sim 1}e\left(3(\Xi_2-\Xi_1)M_0^{1/3}y^{1/3}\right) dy
\end{equation*}
is negligible (by repeated integration by parts (Lemma \ref{repeatedint})) unless
\begin{equation*}\label{key}
\begin{split}
&|\Xi_2-\Xi_1|\ll \frac{T^{\varepsilon}}{M_0^{1/3}}
\implies \xi_2-\tilde{\xi_1}\ll\frac{QX}{M_0^{1/3}N^{1/3}}
\end{split}
\end{equation*}
where $\tilde \xi_1$ depends on $\xi_1, h_2$ and $h_2'$. 
Then, by Cauchy's inequality, $\Delta$ is bounded by 
\begin{equation*}\label{key}
\begin{split}
\Delta_{m=0}\ll & M_0 \bigg(\int_{\xi_1}\sum_{m_1\ll Q}\sum_{q_1\sim Q/m_1}\sum_{h_1\sim\frac{QT}{N}}|\alpha_1|^2\sum_{h_2\sim\frac{QT}{N}}\int\limits_{\xi_2-\tilde \xi_1\ll\frac{QX}{M_0^{1/3}N^{1/3}}}c_{q_1}({h_1}-{h_2})\bigg)^{1/2}\\
&\times \bigg(\int_{\xi_2}\sum_{m_1\ll Q}\sum_{q_2\sim Q/m_1}\sum_{h_2\sim\frac{QT}{N}}|\alpha_2|^2\sum_{h_1\sim\frac{QT}{N}}\int\limits_{\xi_1-\tilde \xi_2\ll\frac{QX}{M_0^{1/3}N^{1/3}}}c_{q_2}({h_1}-{h_2})\bigg)^{1/2}.
\end{split}
\end{equation*}
By our choice of $\alpha_1$ ($||\alpha_1||_2=1$), we note that 
\begin{align}
\int_{\xi_1}\sum_{m_1\ll Q}\sum_{q_1\sim Q/m_1}\sum_{h_1}|\alpha(q_1m_1,h_1,\xi_1)|^2\ll Q^{\varepsilon}.
\end{align}
Thus we get 
\begin{equation}\label{Deltam=0}
\Delta_{m=0}\ll M_0T^{\varepsilon}\cdot \frac{QX}{M_0^{1/3}N^{1/3}}\cdot \left(Q+\frac{QT}{N}\right)\ll \frac{Q^2XM_0^{2/3}T^{\varepsilon}}{N^{1/3}}
.\end{equation}
Hence, by Lemma \ref{Largesievemain}, the contribution of the diagonal part in $\mathfrak{M}$ is 
\begin{align}
    \mathfrak{M}\ll &\frac{N^{4/3}}{T}\sup_{Q\ll Q_0}\frac{1}{Q^2}\sup_{M_0\ll \frac{N^{2+\varepsilon}}{Q_0^3}}M_0^{1/3}\times \frac{Q^2XM_0^{2/3}T^{\varepsilon}}{N^{1/3}}\ll\frac{N^3X}{TQ_0^3}\asymp \frac{N^{3/2}\sqrt{T}}{\sqrt{X}}.\label{mathfrakmdia}
\end{align}
\begin{remark}We require, $\mathfrak{M}\ll NX\sqrt{T}$. Hence, the bound in the diagonal part is smaller than the expected bound as long as $X\gg T^{1/2}$. \end{remark}
\section{Off-Diagonal}\label{S10}
For the $m\neq0$ case, we will start with the analysis of Oscillatory integral $\mathfrak{J}$ in (\ref{CharacterSum}). We recall 
\begin{align}
    \mathfrak{J}=&\int_{y\sim 1}e\left(3(\Xi_2-\Xi_1)M_0^{1/3}y^{1/3}\right)\times e\bigg(-\frac{mM_0y}{m_1^2q_1q_2}\bigg)dy\label{10.0.1}
\end{align}
and $\Xi_2-\Xi_1\sim X_1$. We will now assume that $X_1\gg \frac{T^{\varepsilon}}{M_0^{1/3}}$.  

When $X_1\ll \frac{T^{\varepsilon}}{M_0^{1/3}}$, we can absorb the integral $\mathfrak J$ into the smooth functions if $|m|\ll \frac{Q^2T^{\varepsilon}}{M_0}$ and $\mathfrak{J}$ would be negligibly small for the other range of $m$. This case would be considered separately in Section \ref{X_1small}.  

\subsection{Integral $\mathfrak{J}$}

\begin{lemma}\label{Jintegrallemma}	Let $X_1\gg \frac{T^{\varepsilon}}{M_0^{1/3}}$. Then, if $m\sim\frac{Q^2X_1}{M_0^{2/3}}$, we have  \begin{equation*}\begin{split}
    \mathfrak{J}\asymp\frac{1}{\sqrt{X_1M_0^{1/3}}}\times e\bigg(2\sqrt{\frac{q_1q_2m_1^2}{m}}(\Xi_2-\Xi_1)^{3/2}\bigg)+O(T^{-A})
    \end{split}
.\end{equation*}
In the rest of the range of $m$,  $\mathfrak{J}$ is negligibly small. \end{lemma} 
\begin{proof}
The exponential integral $\mathfrak{J}$ (\ref{10.0.1}) is of the form $\mathfrak{J}=\int_{y\sim 1}e(f(y))$ where $f(y)=3Ay^{1/3}-By$, where 
\begin{align*}
    A=(\Xi_2-\Xi_1)M_0^{1/3}\text{ and }B=\frac{mM_0}{m_1^2q_1q_2}.
\end{align*}
The first and the second order derivatives of $f(y)$ are 
\begin{align*}
    f'(y)&=\frac{A}{y^{2/3}}-B~~\text{ and }~~
    f''(y)=\frac{2A}{3y^{5/3}}
\end{align*}
As $X_1\gg \frac{T^{\varepsilon}}{M_0^{1/3}}$, there would be a stationary point if and only if  $m\sim \frac{Q^2X_1}{M_0^{2/3}}$. If there is no stationary point, by repeated integration by parts (Lemma \ref{repeatedint}), we note that the integral is negligibly small. Then, by Lemma \ref{StationaryPhase1}, we conclude the proof. 
\end{proof}
\subsection{Preliminary Analysis of Character Sum}\label{S10.2}
We will simplify the character sum $\mathfrak{C}$ in (\ref{CharacterSum}) to make it conducive for the application of the Poisson Summation formula. For that, we will introduce a new set of notations that will be used in the rest of this paper.  \\\\
\textbf{Notation:} Let $d=(q_1,q_2)$. Then let $q_1=u_1v_1$ and $q_2=u_2v_2$ such that $v_1,v_2|d^{\infty}$ and $u_1,u_2,d$ are mutually coprime, i.e. $v_1$ is the part of $q_1$ corresponding to the prime divisors of $d$ and $u_1$ is the rest of $q_1$ which is consequently coprime to $d$ . Hence, $u_1,u_2$ and $v_1v_2$ are mutually coprime. We further denote $\tilde v_1=v_1/d$ and $\tilde v_2=v_2/d$. Later in this section, we will observe that $d|m$. Thus we denote $\tilde{m}=m/d$.

Now, we may further split $d=(q_1,q_2)$ into $d=d_0\cdot d_1\cdot d_2$ in the following manner:
$$d_1=(d, \tilde v_1^{\infty}),\ d_2=(d,\tilde v_2^\infty)\text{ and }d_0=\frac{d}{d_1d_2}. $$
We note that $(d_0,\tilde v_1\tilde v_2)=1$. We will also observe that $(\tilde m,\tilde v_1\tilde v_2)=1$. Then, $(\tilde m, d)|d_0$. We recall that $\tilde m=m/d$. Now, we denote $d_m=(\tilde m,d_0^{\infty})$ and $m^*=\frac{m}{dd_m}$. In a nutshell 
$$q_1=d\tilde v_1u_1,~q_2=d\tilde v_2u_2, ~d=d_0d_1d_2,~m=\tilde md=m^*dd_m.$$
We break up the character sum $\mathfrak{C}$
$$\mathfrak{C}=\sum_{\beta\bmod q_1q_2}S(\bar h_1,\beta;q_1)S(\bar h_2,\beta;q_2)e_{q_1q_2}(m\beta)$$
into character sums of mutually coprime modulus $u_1$, $u_2$ and $v_1v_2$. So, we write $$\mathfrak{C}=q_1q_2\mathfrak{C}_1\mathfrak{C}_2\mathfrak{C}_3,$$
where 
\begin{align}
    \mathfrak{C}_1&=\frac{1}{u_1}\sum_{\beta_1\bmod u_1}S(\bar{h_1}\bar{v_1},\beta_1\bar{v_1};u_1)e_{u_1}(m\bar{u_2}\overline{v_1v_2}\beta_1)=e\bigg(-\frac{\bar{m}\bar{h_1}\bar{v_1}u_2v_2}{u_1}\bigg),\\
\mathfrak{C}_2&=\frac{1}{u_2}\sum_{\beta_2\bmod u_2}S(\bar{h_2}\bar{v_2},\beta_2\bar{v_2};u_2)e_{u_2}(m\bar{u_1}\overline{v_1v_2}\beta_2)=e\bigg(-\frac{\bar{m}\bar{h_2}\bar{v_2}u_1v_1}{u_2}\bigg),\\
\mathfrak{C}_3&=\frac{1}{v_1v_2}\sum_{\beta_3\bmod v_1v_2}S(\bar{h_1}\bar{u_1},\beta_3\bar{u_1};v_1)S(\bar{h_2}\bar{u_2},\beta_3\bar{u_2};v_2)e_{v_1v_2}(m\bar{u_1}\bar{u_2}\beta_3)\nonumber\\
&=\underset{u_2v_2x_1+u_1v_1x_2\equiv -m\bmod v_1v_2}{\sideset{}{^*}\sum_{x_1\bmod v_1}\sideset{}{^*}\sum_{x_2\bmod v_2}}e\bigg(\frac{\bar{h_1}\bar{u_1}\bar{x_1}}{v_1}\bigg)e\bigg(\frac{\bar{h_2}\bar{u_2}\bar{x_2}}{v_2}\bigg).
\end{align}
We also get $(m,u_1u_2)=1$. By reciprocity, we can write $\mathfrak{C}_1$ as 
$$\mathfrak{C}_1=e\bigg(-\frac{\bar{m}\bar{h_1}\bar{v_1}u_2v_2}{u_1}\bigg)=e\bigg(\frac{\bar{u_1}u_2\tilde v_2}{mh_1\tilde v_1}\bigg)\times e\bigg(-\frac{u_2\tilde v_2}{u_1mh_1\tilde v_1}\bigg)$$
and absorb the second term into the smooth functions. We treat $\mathfrak{C}_2$ in a similar manner. We also note that the congruence relation $u_2v_2x_1+u_1v_1x_2\equiv -m\bmod v_1v_2$ implies $d|m$ and $(\tilde m, \tilde v_1\tilde v_2)$. Then, we can write $\mathfrak{C}$ as $\mathfrak{C}=q_1q_2\cdot\fC_0 \cdot V(\cdots)$ where 
\begin{align}
     \fC_0=e\left(\frac{\tilde v_2u_2\overline{u_1}}{mh_1\tilde v_1}\right)e\left(\frac{\tilde v_1u_1\overline{u_2}}{mh_2\tilde v_2}\right)\underset{x_1\tilde v_2+x_2\tilde v_1+\tilde m\equiv0 \bmod d\tilde v_1\tilde v_2}{\sumstar_{x_1\bmod d\tilde v_1}\sumstar_{x_2\bmod d\tilde v_2}}e\left(\frac{\overline x_1\overline h_1u_2\overline {u_1}}{d\tilde v_1}\right)e\left(\frac{\overline x_2 \overline h_2u_1\overline{u_2}}{d\tilde v_2}\right).\label{mathfrakc0}
 \end{align}
 Here, we have used the notation $\tilde v_i=v_i/d$. We may further simplify the character sum $\cC$ and get 
 \begin{lemma}\label{CharsumSimplifiedA1A2}
     \begin{align}
    \fC_0=\sumstar_{x_0\bmod d_0}~\sumstar_{x_1\bmod d_1}~\sumstar_{x_2\bmod d_2}e\left(\frac{\overline{u_1}u_2A_1}{mh_1}\right)e\left(\frac{u_1\overline{u_2}A_2}{mh_2}\right)
\end{align}
where 
\begin{align}
    A_1\equiv \begin{cases}
        \tilde v_2\overline{\tilde v_1}&\pmod {m^* h_1},\\
        \tilde v_2\overline{\tilde v_1}[\overline{1+\bar x_0d_m}]&\pmod {d_0d_m},\\
         \tilde v_2(\overline{\bar x_1+\tilde v_1})& \pmod {d_1},\\
        \overline{\tilde v_1x_2}&\pmod {d_2},
    \end{cases}~\text{and}~
     A_2\equiv \begin{cases}
        \tilde v_1\overline{\tilde v_2}&\pmod {m^* h_2},\\
        \tilde v_1\overline{\tilde v_2}[1+\bar x_0d_m]&\pmod {d_0d_m},\\
        \overline{\tilde v_2x_1}&\pmod {d_1},\\
        \tilde v_1(\overline{\bar x_2+\tilde v_2})& \pmod {d_2}\label{A2charsum}
    \end{cases}.
\end{align}
 \end{lemma}
\begin{proof}
     We want to split the character sum $\mathfrak{C}_0$ (\ref{mathfrakc0}) into character sums of the following four coprime moduli ($m^* h_1h_2,~ d_0d_m,~ d_1\tilde v_1$ and $d_2\tilde v_2$). Firstly,  by the  Chinese Remainder Theorem, we split the $x_1$ and the $x_2$ sum:
     $$\sumstar_{x_1\bmod d\tilde v_1}\mapsto \sumstar_{x_{1,0}\bmod d_0}\sumstar_{x_{1,1}\bmod d_1\tilde v_1}\sumstar_{x_{1,2}\bmod d_2}\text{ and }\sumstar_{x_2\bmod d\tilde v_2}\mapsto \sumstar_{x_{2,0}\bmod d_0}\sumstar_{x_{2,1}\bmod d_1}\sumstar_{x_{2,2}\bmod d_2\tilde v_2}.$$
     With these notations, we can also split the congruence relation $$x_1\tilde v_2+x_2\tilde v_1+\tilde m\equiv0 \bmod d\tilde v_1\tilde v_2$$
into 
\begin{align}
    &x_{1,0}\tilde v_2+x_{2,0}\tilde v _1+\tilde m\equiv 0 \bmod{d_0}\iff x_{1,0}\equiv-\overline{\tilde v_2}(x_{2,0}\tilde v _1+\tilde m)\bmod{d_0}\\
    &x_{1,1}\tilde v_2+x_{2,1}\tilde v_1+\tilde m\equiv 0 \bmod {d_1\tilde v_1}\iff\overline{x_{1,1}}\equiv -\tilde v_2\overline{(x_{2,1}\tilde v_1+\tilde m)}\bmod {d_1\tilde v_1}\\
    &x_{1,2}\tilde v_2+x_{2,2}\tilde v_1+\tilde m\equiv 0 \bmod{d_2\tilde v_2}\iff\overline{x_{2,2}}\equiv -\tilde v_1\overline{(x_{1,2}\tilde v_2+\tilde m)}\bmod {d_2\tilde v_2}
\end{align}
Now, we may split the character sum into the following four clusters of coprime moduli ($m^* h_1h_2,~ d_0d_m,~ d_1\tilde v_1$ and $d_2\tilde v_2$):
\begin{align}
    \mathfrak{C}_0=\fC_h\cdot \fC_m\cdot \fC_1\cdot \fC_2
\end{align}
where $\fC_h,~ \fC_m,~ \fC_1$ and $ \fC_2$ are defined below.
\begin{align}
    \fC_h=& e\left(\frac{u_2\overline{u_1}\cdot\tilde v_2 \overline{d_m d\tilde v_1}}{m^* h_1}\right)e\left(\frac{u_1 \overline{u_2}\cdot \tilde v_1 \overline{d_md\tilde v_2}}{m^* h_2}\right)=e\left(\frac{u_2\overline{u_1}\cdot\tilde v_2\overline{\tilde v_1} \cdot \overline{\frac{mh_1}{m^*h_1}}}{m^* h_1}\right)e\left(\frac{u_1 \overline{u_2}\cdot \tilde v_1 \overline{\tilde v_2}\cdot \overline{\frac{mh_2}{m^*h_2}}}{m^* h_2}\right).
\end{align}
\begin{align*}
    \fC_m=& e\left(\frac{u_2 \overline{u_1} \tilde v_2\overline{\frac{mh_1\tilde v_1}{d_0d_m}}}{d_0d_m}\right)e\left(\frac{u_1\overline{u_2}\tilde v_1\overline{\frac{mh_2\tilde v_2}{d_0d_m}}}{d_0d_m}\right)\\&\underset{x_{1,0}\equiv-\overline{\tilde v_2}(x_{2,0}\tilde v _1+\tilde m)\bmod{d_0}}{\sumstar_{x_{1,0}\bmod d_0}~\sumstar_{x_{2,0}\bmod d_0}} 
    e\left(\frac{\overline x_{1,0}\overline h_1u_2\overline {u_1}\overline{\frac{d\tilde v_1}{d_0}}}{d_0}\right)e\left(\frac{\overline x_{2,0} \overline h_2u_1\overline{u_2}\overline{\frac{d\tilde v_2}{d_0}}}{d_0}\right)\\
    =& \sumstar_{x_{0}\bmod d_0}e\left(\frac{u_2 \overline{u_1} \tilde v_2\overline{\tilde v_1}\cdot (\overline{1+\bar x_0d_m})\cdot \overline{\frac{mh_1}{d_0d_m}}}{d_0d_m}\right)e\left(\frac{u_1\overline{u_2}\tilde v_1\overline{\tilde v_2}\cdot (1+\bar x_0d_m)\overline{\frac{mh_2}{d_0d_m}}}{d_0d_m}\right).
\end{align*}
Here, we have made a change of variable $x_0m^*\equiv x_{2,0}\tilde v_1\bmod d_0$. We recall the notation $m=d\tilde m=dd_m\tilde m^*$. 

\begin{align*}
    \fC_1=&e\left(\frac{u_2\overline{u_1}\tilde v_2\overline{\frac{mh_1}{d_1}}}{d_1\tilde v_1}\right)\underset{\overline{x_{1,1}}\equiv -\tilde v_2\overline{(x_{2,1}\tilde v_1+\tilde m)}\bmod {d_1\tilde v_1}}{\sumstar_{x_{1,1}\bmod d_1\tilde v_1}\sumstar_{x_{2,1}\bmod d_1}}e\left(\frac{u_2\overline {u_1} \overline x_{1,1}\overline h_1\cdot \overline{\frac{d}{d_1}}}{d_1\tilde v_1}\right)e\left(\frac{u_1\overline{u_2} \overline x_{2,1} \overline h_2 \overline{\frac{\tilde v_2d}{d_1}}}{d_1}\right)\\
    =& \sumstar_{x_{1}\bmod d_1}e\left(\frac{u_2\overline {u_1}\tilde v_2(\overline{\bar x_1+\tilde v_1})\cdot \overline{\frac{mh_1}{d_1}}}{d_1}\right)e\left(\frac{u_1\overline{u_2}\overline{\tilde v_2}\cdot \overline x_{1} \overline{\frac{mh_2}{d_1}}}{d_1}\right).
\end{align*}
Here, we changed the variable $x_{2,1}$ to $x_{1}$, where $x_{2,1}\equiv x_1\tilde m\bmod d_1$. 
Similarly, we derive
\begin{align*}  \fC_2=& e\left(\frac{u_2\overline{u_1}\overline{\tilde v_1}\cdot \overline x_{2} \cdot \overline{\frac{mh_1}{d_2}}}{d_2}\right)\sumstar_{x_{2}\bmod d_2}e\left(\frac{u_1\overline {u_2}\tilde v_1(\overline{\bar x_2+\tilde v_2})\cdot \overline{\frac{mh_2}{d_2}}}{d_2}\right).
\end{align*}
Now, by the Chinese remainder theorem, we derive the statement of the lemma. 
\end{proof}

Now, we denote $H_1:=\frac{qT}{N}$ and $M_1:=\frac{Q^2X_1}{M_0^{2/3}}$ which are respectively the dyadic range of the $h$ and the $m$ sum. Thus, after the Poisson summation formula on the $m$ sum, (\ref{DeltaFirstModification}) transforms into 
\begin{align}\Delta\ll T^{\varepsilon} \sup_{||\alpha||_2=1}\sup_{X_1\ll \frac{N^{1/3}}{Q}}\Bigg|&\sum_{m_1\ll Q}m_1^2\int\limits_{\xi_1\sim X}\sum_{q_1\sim \frac{Q}{m_1}}\sum_{h_1\sim H_1}\nonumber\\& \int\limits_{\xi_2\sim X}\sum_{q_2\sim \frac{Q}{m_1}}\sum_{h_2\sim H_1}\alpha_1\bar\alpha_2 V\left(\frac{\Xi_2-\Xi_1}{X_1}\right) S_{\cM_2}\Bigg|,\label{DeltaAfterfirstpoisson}\end{align}
where, for $m\neq 0$ and $X_1\gg \frac{T^{\varepsilon}}{M_0^{1/3}}$, by Lemma \ref{Jintegrallemma} and Lemma \ref{CharsumSimplifiedA1A2}, we have 
\begin{align}
    S_{\cM_2}
    =&\frac{M_0}{m_1^2}\cdot \sum_{m\sim M_1} \frac{1}{\sqrt{X_1M_0^{1/3}}}\cdot \mathfrak{C}_0 \cdot e\bigg(2\sqrt{\frac{q_1q_2m_1^2}{m}}(\Xi_{2}-\Xi_1 )^{3/2}\bigg)+O(T^{-A}),
\end{align}
where $\Xi_i=\Xi(\xi_i,m_1q_i,h_i)=\frac{(T+r\xi_i)^{1/3} }{(2\pi)^{1/3}(m_1q_i)^{2/3}h_i^{1/3}}$ for $i=1,2$. 
\subsection{Cauchy's inequality and Poisson Summation Formula on $q_1$}
We now come back to (\ref{DeltaFirstModification}) and modify the  sum over $m,q_1$ and $q_2$ into 
\begin{equation*}
\sum_{m\sim M_1}\sum_{q_1\sim Q/m_1}\sum_{q_2\sim Q/m_1}\rightsquigarrow \sum_{d\ll M_1}\underset{(\tilde{m},u_1u_2\tilde v_1\tilde v_2)=1}{\sum_{\tilde{m}\sim\frac{M_1}{d}}}\underset{(\tilde v_1,\tilde v_2)=1}{\sum_{\tilde v_1,\tilde v_2|d^{\infty}}}\sum_{u_1\sim \frac{Q}{d\tilde v_1m_1}}\sum_{u_2\sim \frac{Q}{d\tilde v_2m_1}}
,\end{equation*} 
which is just a simple interpretation of the notations defined in Section \ref{S10.2}.  We then apply Cauchy's inequality on all the sums except $\xi_2,~h_2$ and $u_2$ in (\ref{DeltaAfterfirstpoisson}). When $X_1\gg \frac{T^{\varepsilon}}{M_0^{1/3}}$, we get 
\begin{equation}\label{18.7}
    \begin{split}
        \Delta\ll &\sup_{||\alpha||_2=1}\sup_{\frac{T^{\varepsilon}}{M_0^{1/3}}\ll X_1\ll \frac{N^{1/3}}{Q}}Q\sqrt{M_0}\Bigg(\sum_{m_1\ll Q}\sum_{d\ll M_1}\frac{1}{d}\underset{(\tilde v_1,\tilde v_2)=1}{\sum_{\tilde v_1,\tilde v_2|d^{\infty}}}\int_{\xi_1\sim X}\sum_{h_1\sim H_1}\sum_{\tilde{m}\sim M_1/d}
        \\&\sum_{u_1\sim \frac{Q}{d\tilde v_1m_1}}\Bigg|\int_{\xi_2\sim X}\sum_{h_2\sim H_1}\sum_{u_2\sim \frac{Q}{d\tilde v_2m_1}}\alpha_2 \mathfrak{C}_0e\bigg(2\sqrt{\frac{q_1q_2m_1^2}{m}}(\Xi_{2}-\Xi_1 )^{3/2}\bigg)V\left(\frac{\Xi_2-\Xi_1}{X_1}\right)\Bigg|^2\Bigg)^{1/2}.
    \end{split}
\end{equation}
Here, we have used the fact that 
\begin{align}
    \sum_{m_1}\sum_{d}\underset{(\tilde v_1,\tilde v_2)=1}{\sum_{\tilde v_1,\tilde v_2|d^{\infty}}}\sum_{u_i\sim \frac{Q}{d\tilde v_im_1}}\int_{\xi_i\sim X}\sum_{h_i\sim\frac{QT}{N}}|\alpha(m_1d\tilde v_iu_i,\xi_i,h_i)|^2\ll T^{\varepsilon}.\label{vectorl2norm}
\end{align}
We open up the absolute square in (\ref{18.7}), denote all the copies of variables by $'$ in superscript (after opening the absolute square, variable $u_2$ has two copies. We denote one by $u_2$ and another one by $u_2'$) and the $u_1$ sum is of the form 
\begin{equation}\label{Firstdefofce}
\begin{split}
\mathcal{E}:=&\sum_{u_1\sim \frac{Q}{d\tilde v_1m_1}} \mathfrak{C}_0\overline{\mathfrak{C}_0'}e\bigg(2\sqrt{\frac{\tilde v_1du_1q_2m_1^2}{m}}(\Xi_{2}-\Xi_1 )^{3/2}-2\sqrt{\frac{\tilde v_1du_1q_2'm_1^2}{m}}(\Xi_{2}'-\Xi_1 )^{3/2}\bigg)
\end{split}          
\end{equation}
where $q_2=d\cdot \tilde v_2\cdot u_2$, $q_2'=d\cdot \tilde v_2\cdot u_2'$, $\Xi_2'=\Xi(\xi_2',q_2'm_1,h_2')$, $\Xi(\xi,q,h)=\frac{(T+r\xi)^{1/3} }{(2\pi)^{1/3}q^{2/3}h^{1/3}}$ and $\mathfrak{C}_0'$ is a copy of $\mathfrak{C}_0$, i.e. 
\begin{align*}
    \mathfrak{C}_0'=\sideset{}{^*}\sum_{x_0'\bmod d_0}~\sideset{}{^*}\sum_{x_1'\bmod d_1}~\sideset{}{^*}\sum_{x_2'\bmod d_2}~e\left(\frac{\bar u_1u_2' A_1}{mh_1}+\frac{\bar u_2'u_1A_2'}{mh_2'}\right).
\end{align*}
Now, we apply the Poisson summation formula (\ref{Poissonsum}) to the $u_1$ sum (\ref{Firstdefofce}). We note that $\fC_0\bar \fC_0'$ is of modulus $mh_1h_2h_2'$, which  
gives a character sum $\mathcal{C}$ modulo $mh_1h_2h_2'$, and the rest of the $\mathcal{E}$ gives us an oscillatory integral (see \ref{Poissonsum}).  In the oscillatory integral,  we make a change of variable $y\mapsto \frac{yQ}{m_1\tilde v_1d}$. We also bring the $\xi_1$ integral inside to get a two-variable oscillatory integral $\mathcal{I}$. Once we write out all the notations in their original form, we get 
\begin{lemma}
\begin{equation}\label{afterpoissonofq1}
\int_{\xi_1\sim X}\mathcal{E}\ \ d\xi_1=\frac{Q}{m_1v_1mh_1h_2h_2'}\ \sum_{\tilde{u_1}\in \mathbb{Z}}\ \mathcal{C}\ \mathcal{I}	,
\end{equation}
where 
\begin{equation}\label{afterpoissonofq1detailed}
\begin{split}
\mathcal{C}=&\sideset{}{^*}\sum_{x_0\bmod d_0}~\sideset{}{^*}\sum_{x_1\bmod d_1}~\sideset{}{^*}\sum_{x_2\bmod d_2}~\sideset{}{^*}\sum_{x_0'\bmod d_0}~\sideset{}{^*}\sum_{x_1'\bmod d_1}~\sideset{}{^*}\sum_{x_2'\bmod d_2}\\
&\times \underset{(\alpha,mh_1)=1}{\sum_{\alpha\bmod mh_1h_2h_2'}}e\left(\frac{\bar \alpha u_2 A_1}{mh_1}+\frac{\bar u_2\alpha A_2}{mh_2}\right)e\left(-\frac{\bar \alpha u_2' A_1'}{mh_1}-\frac{\bar u_2'\alpha A_2'}{mh_2'}\right)e\bigg(\frac{\tilde{u_1}\alpha}{mh_1h_2h_2'}\bigg)
\end{split}\end{equation}
and 
\begin{equation}\label{afterpoissonofq1detailedI}
\begin{split}
\mathcal{I}	=\int_{\br}\int_{\br}e&\bigg(2\sqrt{\frac{Qyq_2m_1}{m}}\bigg(\frac{(T+r\xi_2)^{1/3} }{(2\pi q_2^2m_1^2h_2)^{1/3}}-\frac{(T+r\xi_1)^{1/3}}{(2\pi (Qy)^2h_1)^{1/3}}\bigg)^{3/2}\\&-2\sqrt{\frac{Qy q_2'm_1}{m}}\bigg(\frac{(T+r\xi_2')^{1/3} }{(2\pi q_2'^{2}m_1^2h_2')^{1/3}}-\frac{(T+r\xi_1)^{1/3}}{(2\pi (Qy)^{2}h_1)^{1/3}}\bigg)^{3/2}\bigg)\\\times e&\bigg(-\frac{\tilde{u_1}y Q}{dm_1\tilde v_1mh_1h_2h_2'}\bigg)V\left(\frac{\xi_1}{X}\right)V(y)V\left(\frac{\Xi_2-\Xi_1}{X_1}\right)V\left(\frac{\Xi_2'-\Xi_1}{X_1}\right)dyd\xi_1.\\
\end{split}
\end{equation}  
\end{lemma}
\subsection{Analysis of $\mathcal{C}$:}
We recall that $m=d\cdot d_m\cdot m^*$ where $d_m=(m/d,d^{\infty})$ and $(d_m,\tilde v_1\tilde v_2)=1$. We recall that, $(dd_m,m^*h_1h_2h_2')=1$ because $(q_1,h_1)=1$, $(q_2,h_2)=1$ and $(q_2',h_2')=1$. Now, we define a set of notations similar to the section \ref{S10.2} to group the prime factors of $m^*h_1h_2h_2'$ to maintain coprimality.

Let $\hat h_1=(h_1,(m^*h_2h_2')^{\infty})$ and $\tilde h_1=\frac{h_1}{\hat h_1}$. Let $\hat h_{2}=(h_2,(m^* h_1)^\infty)$, $\hat h'_{2}=(h_2',(m^*h_1)^\infty)$, $\tilde h_2=\frac{h_2}{\hat h_2}$ and $\tilde h_2'=\frac{h_2'}{\hat h_2'}$. Then, we may write 
$$mh_1h_2h_2'=(dd_m)\cdot (m^*\hat h_1\hat h_{2}\hat h'_{2})\cdot (\tilde h_1)\cdot (\tilde h_2\tilde h_2')$$
where every group inside the parentheses is mutually coprime. We further denote $\cD=dd_m$ and  $\cM=m^*\hat h_1\hat h_{2}\hat h'_{2}$.   If we recall the definition of $A_1$ and $A_2$ (Lemma \ref{CharsumSimplifiedA1A2}), we note that $A_1\equiv A_1'\bmod m^*h_1$ and none of $A_1,A_1',A_2,A_2'$ modulo any of  $\cM, \tilde h_1,\tilde h_2, \tilde h_2'$ depends on $x_i$ and $x_i'$. Then, by CRT, we can write 
\begin{align*}
    \cC=\cC_d\cdot \cC_m\cdot \cC_{h1}\cdot \cC_{h2}
\end{align*}
where 
\begin{align}
    \cC_d
    =& \sideset{}{^*}\sum_{x_0\bmod d_0}~\sideset{}{^*}\sum_{x_1\bmod d_1}~\sideset{}{^*}\sum_{x_2\bmod d_2}~\sideset{}{^*}\sum_{x_0'\bmod d_0}~\sideset{}{^*}\sum_{x_1'\bmod d_1}~\sideset{}{^*}\sum_{x_2'\bmod d_2}\nonumber\\
    & \times S( \overline{m^*h_1}(A_1u_2-A_1'u_2'),\bar u_2A_2\overline{m^*h_2}-\bar u_2'A_2'\overline{m^*h_2'}+\tilde u_1 \overline{m^*h_1h_2h_2'};dd_m),\label{Cd}\\
    \cC_m
    =& S( A_1 \overline{\cD\tilde h_1}\hat h_{2}\hat h'_{2} (u_2-u_2'),\overline{\cD\tilde h_2\tilde h_2'}\hat h_1(\bar u_2A_2h_2'-\bar u_2'A_2'h_2+\tilde u_1\overline{h_1}); m^*\hat h_1\hat h_2\hat h_2' ),\label{CM}\\
    \cC_{h1}%=&\sumstar_{\beta\bmod \tilde h_1}e_{\tilde h_1}(\bar \beta A_1\overline{\cD m^*\hat h_1}(u_2-u_2')+\beta \tilde u_1\overline{m^*\hat h_1h_2h_2'} )\\
    =&\begin{cases}S(A_1\overline{\cD m^*\hat h_1}(u_2-u_2'),\tilde u_1\overline{m^*\hat h_1h_2h_2'}; \tilde h_1)&\text{when }\tilde u_1\neq 0~\\
     \mathfrak{c}_{\tilde h_1}(u_2-u_2')& \text{when }\tilde u_1=0,\end{cases}\label{Ch1}\\\text{ and }~~&\nonumber\\
    \cC_{h2}
    =&\tilde h_2\tilde h_2'~~\delta(\bar u_2 A_2h_2'-\bar u_2'A_2' h_2+\tilde u_1\overline{h_1}\equiv 0 \bmod \tilde h_2\tilde h_2').\label{Ch2}
\end{align}
For $\cC_d$, we will always use the trivial bound for $\cC_d\ll d^3d_m$ and the Kloosterman sums, we will use the Weil's bound (see \cite[Corollary 11.12]{IK}) $S(a,b;c)\ll_{\varepsilon} c^{\varepsilon}(a,b,c)^{1/2}c^{1/2}$ and for the Ramanujan sum, we will use $\mathfrak{c}_q(a)\leq (a,q)$ (see \cite[(3.5)]{IK}) 

In particular, when $\tilde u_1\neq 0$, $u_3=u_2-u_2'\neq 0$, analyzing the congruence relations arising from $\cC_{h2}$ and from the Weil's bound of $\cC_m$, $\cC_{h1}$, we get 
\begin{align}
    \sum_{0<|u_3|\ll \frac{Q_2}{m_1\tilde v_2d}}|u_3|^{-1}\sum_{\tilde u_1\sim Q_1}|\cC|\ll  T^{\varepsilon}d^3d_m\sqrt{m^* h_1}h_2h_2'\left(\frac{Q_1\hat h_1}{h_2h_2'}+1\right).\label{charactersumgenericcase}
\end{align}
When $\tilde u_1=0$, we get $\tilde h_2=\tilde h_2'$. Then, we use the trivial bound on $\cC_d$, $\cC_{h2}$, Weil's bound on $\cC_m$ and the Ramanaujan sum bound on $\cC_{h1}$ to bound 
\begin{align}
    \sum_{|u_3|\ll \frac{Q_2}{m_1d\tilde v_2}}|\cC|\ll T^{\varepsilon}d^3d_m\sqrt{m^*\hat h_1}h_2h_2'\left( \frac{Q_2}{m_1d\tilde v_2}+\sqrt{m^*\hat  h_1}\tilde h_1\right).\label{charsumu1zero}
\end{align}
When $\tilde u_1\neq 0$ and $u_3$ is small, we may use a similar bound of the form
\begin{align}
    \sum_{|u_3|\ll \frac{Q_2}{m_1d\tilde v_2}}\sum_{0\neq|\tilde u_1|\ll Q_3}|\cC|\ll T^{\varepsilon}d^3d_m\sqrt{m^*\hat h_1} h_2 h_2'\cdot Q_3\cdot \left(\frac{Q_2\sqrt{\tilde h_1}}{m_1d\tilde v_2}+\sqrt{m^*\hat h_1}\right).\label{charsumu3smallu1nonzero}
\end{align}

\subsection{Analysis of $\mathcal{I}$}
We will now analyze the two variables oscillatory integral $\mathcal{I}$ (\ref{afterpoissonofq1detailedI}) by Lemma \ref{Hormandertheorem} and Lemma \ref{Hormanderlemma }. We recall that   $\Xi_i\sim \frac{T^{1/3}}{(Q^2\cdot \frac{QT}{N})^{1/3}}\sim \frac{N^{1/3}}{Q}$. So, let us denote $Y_1=\frac{X_1}{N^{1/3}/Q}\ll 1$. Due to the smooth functions $V((\Xi_2-\Xi_1)/X_1)$ and $V((\Xi_2'-\Xi_1)/X_1)$, the actual support of the variables $\xi_1$ and $y$ are respectively of the size  $XY_1$ and $Y_1$ instead of $X$ and $1$. So, let us make the following changes of variables
$$\xi_1\mapsto XY_1z_1~~~\text{ and }~~~~y\mapsto y_1Y_1, ~~~~~~ d\xi_1dy=XY_1^2 dz_1dy_1.$$
With these notations, let us also define the smooth function 
$$U(z_1,y_1):=V(Y_1z_1)V(Y_1y_1)V\left(\frac{\Xi_2-\Xi_1}{X_1}\right)V\left(\frac{\Xi_2'-\Xi_1}{X_1}\right).$$
$U(z_1,y_1)$ is supported in a compact set $K\subset [Y_1^{-1},2Y_1^{-1}]\times  [Y_1^{-1},2Y_1^{-1}]$ and satisfies 
\begin{align}
    \frac{\partial^i\partial^j}{\partial z_1^i\partial z_1^j}U(z_1,y_1)\ll \frac{1}{X_1^{i+j}}\frac{\partial^i\partial^j\Xi_1}{\partial z_1^i\partial z_1^j}\ll \left(\frac{Y_1\Xi_1}{X_1}\right)^i\left(\frac{\Xi_1}{X_1y_1}\right)^j\ll 1.\label{decayofUhormander}
\end{align}

We recall that 
$$\Xi(\xi,qm_1,h)=\frac{(T+r\xi)^{1/3}}{(2\pi (qm_1)^2h)^{1/3}},~ \Xi_2=\frac{(T+r\xi_2)^{1/3} }{(2\pi q_2^2m_1^2h_2)^{1/3}},~ \Xi_2'=\frac{(T+r\xi_2')^{1/3} }{(2\pi q_2'^2m_1^2h_2')^{1/3}}$$
and we define 
\begin{equation}\label{notation3}
\begin{split}
\Xi_1(z_1,y_1)&:=\frac{(T+r XY_1z_1)^{1/3}}{(2\pi (QY_1y_1)^2h_1)^{1/3}}\\
M(z_1,y)&:=\frac{(T+r\xi_2)^{1/3} }{(2\pi q_2^2m_1^2h_2)^{1/3}}-\frac{(T+r XY_1z_1)^{1/3}}{(2\pi (QY_1y_1)^2h_1)^{1/3}}=\Xi_2-\Xi_1,\\
M'(z_1,y)&:=\frac{(T+r\xi_2')^{1/3} }{(2\pi q_2'^2m_1^2h_2')^{1/3}}-\frac{(T+r XY_1z_1)^{1/3}}{(2\pi (QY_1y_1)^2h_1)^{1/3}}=\Xi_2'-\Xi_1,\\
L_k&:=\sqrt{q_2m_1}M^k-\sqrt{q_2'm_1}M'^k, \text{ for }k\in \mathbb Q,\\
q_1&=\frac{\tilde{u_1}}{m_1\tilde v_1d}, ~~\text{ and }~~F=\frac{q_1 Q}{mh_1h_2h_2'}=\frac{\tilde u_1 Q}{m_1d\tilde v_1mh_1h_2h_2'}.\\
\end{split}
\end{equation}
With the above set of notations we can write \begin{align}\mathcal{I}=XY_1^2\int_{\mathbb{R}}\int_{\mathbb{R}}e(f(z_1,y_1))U(z_1,y_1)dz_1dy_1\label{defnofcI}\end{align}where the phase function is 
\begin{equation}\label{Iintegralphasefunction}
f(z_1,y_1)=2\sqrt{QY_1y_1/m}L_{3/2}-FY_1y_1
.\end{equation}
Then, we deduce that
\begin{lemma}\label{AnalysisofI}
Let \begin{align}
    Q_1=m_1\tilde v_1d\cdot \frac{mh_1h_2h_2'}{Q}\cdot \frac{m_1(q_2'-q_2)X_1^{3/2}}{\sqrt{m}},~~~~ Y_1=\frac{X_1Q}{N^{1/3}}.\label{sizeofQ1}
\end{align}
When $|q_2-q_2'|\gg \frac{QT^{\varepsilon}}{m_1M_0^{1/3}Y_1X_1}$, $\mathcal{I}$ (\ref{defnofcI}) is negligibly small unless $\tilde u_1\sim Q_1$. 
In that range, we have $\Xi_2'-\Xi_2\sim \frac{(q_2-q_2')X_1}{Q/m_1}$ and 
\begin{equation}\label{mathcalI}
\begin{split}
\mathcal{I}\asymp  & X \cdot \frac{Q^2(u_2-u_2')^{-1}}{m_1d\tilde v_2M_0^{1/3}N^{1/3}}\cdot e\bigg(\frac{h_1h_2h_2'q_2q_2'm_1(\Xi_2'-\Xi_2)^3}{\tilde u_1(m_1\tilde v_1d)^{-1}(q_2-q_2')}\bigg)+O\left(\frac{XQ^2|q_2-q_2'|^{-2}}{M_0^{2/3}X_1^2m_1^2}\right).
\end{split}
\end{equation}
 \end{lemma}

\begin{proof}
   We will use Lemma \ref{Hormandertheorem} and Lemma \ref{Hormanderlemma } to prove this lemma, and we will frequently use the notations defined in (\ref{notation3}).  We note that the main term in (\ref{Hormanderlemmaequation}) is independent of the choice of $\lambda$. So we will choose it at the end. We also note that in the notation of Lemma \ref{Hormanderlemma }, the smooth function $U(z_1,y_1)$ is supported in a compact set $K\subset [Y_1^{-1}, 2Y_1^{-1}]^2$ and  by (\ref{decayofUhormander}), satisfies $\partial^i\partial^jU(z_1,y_1)\ll 1$. Then, we will calculate the first-order partial derivatives of $f$. 
    \begin{equation}\label{twovarfirstderivative}
\begin{split}
\frac{\partial f}{\partial y_1}&=\sqrt{\frac{QY_1}{my_1}}(L_{3/2}+2L_{1/2}\times\Xi_1)-FY_1,\\
\frac{\partial f}{\partial z_1}&=-\sqrt{\frac{QY_1y_1}{m}}L_{1/2}\times\frac{rXY_1\Xi_1}{(T+rXY_1z_1)}.
\end{split}
\end{equation}
where $L_k$s and $F$ are as defined in (\ref{notation3}).

Let $S\subset K$, where $\frac{\partial f}{\partial z_1}\gg T^{\varepsilon}$. In any subset of $S$, the integral is negligibly small by Lemma \ref{Hormandertheorem}. Now, in $S^c$, \begin{align}\frac{\partial f}{\partial z_1}\ll T^{\varepsilon}\implies |L_{1/2}|\ll T^{\varepsilon}\frac{T\sqrt{m}}{rXY_1\Xi_1\sqrt{Q}}\sim T^{\varepsilon}\frac{\sqrt{m}}{X_1\sqrt{Q}}\label{L1/2bound} .\end{align}
Multiplying $L_{1/2}$ (\ref{notation3}) by $\frac{1}{m_1}\sqrt{q_2m_1}M^{1/2}+\sqrt{q_2'm_1}M'^{1/2}\sim \frac{\sqrt{QX_1}}{m_1}$, and by (\ref{L1/2bound}), we get 
\begin{align*}
    |q_2M-q_2'M'|=|(q_2-q_2')(\Xi_2-\Xi_1)-q_2'(\Xi_2'-\Xi_2)|\ll \frac{T^{\varepsilon}\sqrt{m}}{X_1\sqrt{Q}}\cdot\frac{\sqrt{QX_1}}{m_1}\sim \frac{QT^{\varepsilon}}{m_1M_0^{1/3}}.
\end{align*}
And we already had $X_1\leq\Xi_2-\Xi_1\leq 2X_1$. Hence, 
\begin{align}
    \left|\frac{(q_2-q_2')X_1}{Q/m_1}-(\Xi_2'-\Xi_2)\right|\ll \frac{T^{\varepsilon}}{M_0^{1/3}}.\label{Xi_2q_2relation}
\end{align}
We emphasize on the fact that when $|q_2-q_2'|\gg \frac{QT^{\varepsilon}}{m_1M_0^{1/3}X_1}$, $\Xi_2'-\Xi_2\sim  \frac{(q_2-q_2')X_1}{Q/m_1}$. With this condition, we will analyze $\frac{\partial f}{\partial y}$ (\ref{twovarfirstderivative}). Following the notations in (\ref{notation3}), we derive that 
\begin{align}
    \sqrt{\frac{QY_1}{my_1}}(L_{3/2}+2L_{1/2}\Xi_1)
    =& \sqrt{\frac{QY_1}{my_1}}\left[(\Xi_2+\Xi_1)L_{1/2}-(\Xi_2'-\Xi_2)\sqrt{q_2m_1}M'^{1/2}\right].\nonumber
\end{align}
By (\ref{L1/2bound}), $\sqrt{\frac{QY_1}{my_1}}(\Xi_2+\Xi_1)L_{1/2}\ll T^{\varepsilon}$. Hence, in $S^c$, the integral is negligibly small unless 
\begin{align}
    \frac{\partial f}{\partial y}\ll T^{\varepsilon}\implies \left|\frac{Y_1 Q\sqrt{X_1}}{\sqrt m}\cdot(\Xi_2'-\Xi_2)+FY_1\right|\ll T^{\varepsilon}.\label{partialybound}
\end{align}
Here we have used $y_1\sim Y_1^{-1},~ q_2\sim Q/m_1$ and $M'\sim X_1$. Once we recall the definition of $F$ (\ref{notation3}) and $Q_1$ (\ref{sizeofQ1}), (\ref{partialybound}) along with (\ref{Xi_2q_2relation}) implies that the integral is negligibly small unless 
$$F\sim \frac{m_1(q_2'-q_2)X_1^{3/2}}{\sqrt{m}} \iff \tilde u_1\sim Q_1,$$
provided $|q_2-q_2'|\gg \frac{QN^{2\varepsilon}}{m_1M_0^{1/3}Y_1X_1}$. For the rest of the proof, we will assume this condition. 

 When $\tilde u_1\sim Q_1$, we have a unique stationary point $(z_0,y_0)$ such that $f'(z_0,y_0)=0$. We have 
\begin{align}
&\frac{\partial f}{\partial z_1}=0\iff L_{1/2}=0\iff q_2M=q_2'M' \iff \Xi_1(z_0)=\frac{q_2\Xi_2-q_2'\Xi_2'}{q_2-q_2'},\nonumber\\
&\frac{\partial f}{\partial y_1}=0\iff 	\sqrt{\frac{QY_1}{my_0}}L_{3/2}=FY_1\iff \sqrt{y_0}=\sqrt{\frac{Q}{mY_1}}F^{-1}L_{3/2}.\label{Xi_1}
\end{align} From (\ref{Xi_1}), we explicitly calculate $y_0$:  
\begin{equation}\label{M3/2}
\begin{split}
y_0&=\frac{Qm(h_1h_2h_2')^2}{q_1^2Q^2Y_1}\times\frac{q_2q_2'm_1(\Xi_2'-\Xi_2)^{3}}{(q_2-q_2')}.
\end{split}
\end{equation}
From (\ref{Iintegralphasefunction}), (\ref{Xi_1}), (\ref{M3/2}) and following the definitions of $F$ and $q_1$ (\ref{notation3}),  we derive  
\begin{equation*}\label{key}
\begin{split}
f(z_0,y_0)%&=2\sqrt{Qy_0/m}L_{3/2}-Fy_0=Fy_0\\
&=FY_1y_0=\frac{h_1h_2h_2'q_2q_2'm_1(\Xi_2'-\Xi_2)^3}{q_1(q_2-q_2')}.
\end{split}
\end{equation*}
Once we calculate the second-order partial derivatives of $f$ from (\ref{twovarfirstderivative}), we derive the determinant of the Hessian matrix 
$$\det(H_f)=\frac{-QY_1}{12my_1}\cdot  \frac{r^2X^2Y_1^2}{(T+rXY_1z_1)^2}\cdot[\Xi_1^2L_{3/2}L_{-1/2}+L_{1/2}\Xi_1(L_{3/2}+33L_{1/2}\Xi_1-12L_{-1/2}\Xi_1^2)],$$
where the $L_k$s are as defined in (\ref{notation3}). By (\ref{Xi_1}), we note that at the stationary point $(z_0,y_0)$, $L_{1/2}=0$ and at $(z_0,y_0)$,  
$$L_{3/2}L_{-1/2}=-\frac{\sqrt{q_2q_2'm_1}(\Xi_2'-\Xi_2)^{3/2}}{(q_2-q_2')^{1/2}}\cdot \frac{\sqrt{m_1}(q_2-q_2')^{3/2}}{\sqrt{q_2q_2'}(\Xi_2'-\Xi_2)^{1/2}}=-m_1(q_2-q_2')(\Xi_2'-\Xi_2). $$
With these observations and (\ref{Xi_2q_2relation}), we derive 
\begin{align*}
    \det(H_f(z_0,y_0))\sim \frac{M_0^{2/3}N^{2/3}Y_1^4}{Q^4}m_1^2(q_2-q_2')^2.
\end{align*}
Now, the main term of (\ref{mathcalI}) follows from (\ref{Hormanderlemmaequation}) of Lemma \ref{Hormanderlemma }.

Now, we choose our $\lambda$ (see \ref{Hormanderlemmaequation}) to be 
$$\lambda=\frac{M_0^{1/3}N^{1/3}Y_1^2}{Q^2}m_1|q_2-q_2'|\sim \frac{M_0^{1/3}Y_1X_1m_1|q_2-q_2'|}{Q}$$
and we have  $|\partial_1^{i}\partial_2^{j}U|\ll1$ by (\ref{decayofUhormander}). Thus, we get the error term, which completes the proof of our lemma.
\end{proof}
\begin{remark}
    The case of the zero frequency ($\tilde u_1=0$) is evaluated in Section \ref{u_1zero}. The case of $|q_2-q_2'|\ll \frac{QT^{\varepsilon}}{m_1M_0^{1/3}Y_1X_1}$ is considered in Section \ref{q_2small} and the contribution of the error term in (\ref{mathcalI}) is evaluated in Section \ref{errorterm}. Thus, in the rest of the work we will consider the main term of (\ref{mathcalI}) and only in the case of $\tilde u_1\neq 0$ and $|q_2-q_2'|\gg \frac{QT^{\varepsilon}}{m_1M_0^{1/3}Y_1X_1}$ and $X_1\gg \frac{T^{\varepsilon}}{M_0^{1/3}}$. 
\end{remark}
\subsection{Preparation for the $\xi_2$ integral}
When $X_1\gg \frac{T^{\varepsilon}}{M_0^{1/3}}$, after the application of the Poisson summation formula (Lemma \ref{afterpoissonofq1}) we can rewrite $\Delta$ (\ref{18.7}) as
\begin{align}
     \Delta\ll &\sup_{||\alpha||_2=1}\sup_{\frac{T^{\varepsilon}}{M_0^{1/3}}\ll X_1\ll \frac{N^{1/3}}{Q}}Q\sqrt{M_0}\Bigg(\sum_{m_1\ll Q}\sum_{d\ll M_1}\frac{1}{d}\underset{(\tilde v_1,\tilde v_2)=1}{\sum_{\tilde v_1,\tilde v_2|d^{\infty}}}\sum_{h_1\sim H_1}\sum_{\tilde{m}\sim M_1/d}\int_{\xi_2\sim X}\nonumber
        \\ &\sum_{h_2\sim H_1}\sum_{u_2\sim \frac{Q}{d\tilde v_2m_1}}\alpha_2\sum_{h_2'\sim H_1}\sum_{u_2'\sim \frac{Q}{d\tilde v_2m_1}} \frac{Q}{m_1\tilde v_1dmh_1h_2h_2'}\sum_{\tilde u_1\in \bz}|\cC|\int_{\xi_2'\sim X}\bar\alpha_2' \cdot \cI\Bigg)^{1/2}.\label{Deltaafterpoissononq1}
\end{align}
Now, let $|q_2-q_2'|\gg \frac{QT^{\varepsilon}}{m_1M_0^{1/3}Y_1X_1}$. Once we replace $\cI$ with its main term (\ref{mathcalI}) and inside the square root of (\ref{Deltaafterpoissononq1}), we apply Cauchy's inequality on all but the $\xi_2'$ integral, we get 
\begin{equation}
    \begin{split}
        \Delta\ll &\sup_{||\alpha||_2=1}\sup_{\frac{T^{\varepsilon}}{M_0^{1/3}}\ll X_1\ll \frac{N^{1/3}}{Q}}Q\sqrt{XM_0}\cdot S_3^{1/4}S_4^{1/4}.
    \end{split}
\end{equation}
where 
\begin{align}
    S_3=&\sum_{m_1\ll Q}\sum_{d\ll M_1}\frac{1}{d}\underset{(\tilde v_1,\tilde v_2)=1}{\sum_{\tilde v_1,\tilde v_2|d^{\infty}}}\int\limits_{\xi_2\sim X}\sum_{h_2\sim H_1}\sum_{u_2\sim \frac{Q}{d\tilde v_2m_1}}|\alpha_2|^2 \sum_{h_1\sim H_1}\sum_{\tilde{m}\sim\frac{M_1}{d}}\sum_{h_2'\sim H_1} \nonumber
        \\&\times \frac{Q}{m_1\tilde v_1dmh_1h_2h_2'}\cdot  \frac{Q^2}{m_1\tilde v_2dM_0^{1/3}N^{1/3}}\sum_{u_2'\sim \frac{Q}{d\tilde v_2m_1}}|u_2-u_2'|^{-1}\sum_{\tilde u_1\sim Q_1}|\cC|\cdot \label{S3sum}
\end{align}
and 
\begin{align}
    S_4=& \sum_{m_1\ll Q}\sum_{d\ll M_1}\frac{1}{d}\underset{(\tilde v_1,\tilde v_2)=1}{\sum_{\tilde v_1,\tilde v_2|d^{\infty}}}\sum_{h_1\sim H_1}\sum_{\tilde{m}\sim\frac{M_1}{d}}\sum_{h_2\sim H_1}\sum_{h_2' \sim H_1}\sum_{u_2'\sim \frac{Q}{d\tilde v_2m_1}}\nonumber 
        \\&\times \frac{Q}{m_1\tilde v_1dmh_1h_2h_2'}\cdot  \frac{Q^2}{m_1\tilde v_2dM_0^{1/3}N^{1/3}}\sum_{u_2\sim \frac{Q}{d\tilde v_2m_1}}|u_2-u_2'|^{-1}\sum_{\tilde u_1\sim Q_1}|\cC|\cdot S_{\xi},\label{S4sum}
\end{align}
 where 
 \begin{align}
     S_{\xi}=\int\limits_{\xi_2\sim X}\Bigg|\int\limits_{\xi_2'\sim X}\alpha_2'e\bigg(\frac{h_1h_2h_2'q_2q_2'm_1(\Xi_2'-\Xi_2)^3}{\tilde u_1(m_1\tilde v_1d)^{-1}(q_2-q_2')}\bigg)V\left(\frac{\Xi_2'-\Xi_2}{\frac{(q_2-q_2')X_1}{Q/m_1}}\right)\Bigg|^2.\label{S_xi}
 \end{align}
 By (\ref{sizeofQ1}), we get that $Q_1\ll m_1\tilde v_1dX_1^{3/2}\sqrt{M_1}H_1^3$. We also recall $H_1=\frac{QT}{N}$ and $M_1=\frac{Q^2X_1}{M_0^{2/3}}$. Then, evaluating the character sum $\cC$ by (\ref{charactersumgenericcase}), we can bound $S_3$ (\ref{S3sum}) by 
\begin{align}
    S_3=&\sum_{m_1\ll Q}\sum_{d\ll M_1}\underset{(\tilde v_1,\tilde v_2)=1}{\sum_{\tilde v_1,\tilde v_2|d^{\infty}}}\int\limits_{\xi_2\sim X}\sum_{h_2\sim H_1}\sum_{u_2\sim \frac{Q}{d\tilde v_2m_1}}|\alpha_2|^2 \sum_{h_2'\sim H_1} \sum_{h_1\sim H_1}\sum_{\tilde{m}\sim\frac{M_1}{d}}\nonumber
        \\&\times T^{\varepsilon}\frac{\sqrt{d_m}\cdot Q^3}{M_0^{1/3}N^{1/3}}\left(\frac{\sqrt{d}X_1^{3/2}\sqrt{H_1}\hat h_1}{m_1\tilde v_2}+\frac{1}{m_1^2\tilde v_1\tilde v_2\sqrt{M_1H_1d}}\right)\nonumber\\
        \ll&  \frac{T^{\varepsilon}\cdot Q^3}{M_0^{1/3}N^{1/3}}\left(X_1^{3/2}H_1^{5/2}M_1+H_1^{3/2}M_1^{1/2}\right)\ll  T^{\varepsilon}\left(\frac{Q^{15/2}X_1^{5/2}T^{5/2}}{M_0N^{17/6}}+\frac{Q^{11/2}T^{3/2}X_1^{1/2}}{M_0^{2/3}N^{11/6}}\right).\label{S3bound}
\end{align}
Here, we have used (\ref{vectorl2norm}) and 
\begin{align}
    \sum_{\tilde m\sim \frac{M_1}{d}}d_m=\sum_{\tilde m\sim \frac{M_1}{d}}(\tilde m, d^{\infty})\ll \frac{M_1^{1+\varepsilon}}{d}\text{ and }\sum_{h_1\sim H_1}\hat h_1=\sum_{h_1\sim H_1}(h_1, m^*)\ll H_1^{1+\varepsilon}.\label{hath1sum}
\end{align}
\subsection{$\xi_2$ integral}
In $S_{\xi}$ (\ref{S_xi}), we open up the absolute square and denote the $\xi_2$ integral by $\cI_2$
\begin{align*}
    \cI_2:=\int\limits_{\xi_2\sim X}e\bigg(\frac{[(\Xi_2'-\Xi_2)^3-(\Xi_2''-\Xi_2)^3]}{\tilde u_1(m_1\tilde v_1dh_1h_2h_2'q_2q_2'm_1)^{-1}(q_2-q_2')}\bigg)V\left(\frac{\Xi_2'-\Xi_2}{\frac{(q_2-q_2')X_1}{Q/m_1}}\right)V\left(\frac{\Xi_2''-\Xi_2}{\frac{(q_2-q_2')X_1}{Q/m_1}}\right)d\xi_2.
\end{align*}
Let us make a change of variable 
$\xi_2\mapsto X\cdot Y_2z_2$, where $Y_2:=\frac{m_1(q_2-q_2')X_1}{N^{1/3}}\ll \frac{QX_1}{N^{1/3}}$. Hence, $\Xi_2=\frac{(T+rXY_2z_2)^{1/3}}{(2\pi q_2^2m_1^2h_2)^{1/3}}$. 
With this change, we note that the smooth function 
$$U(z_2)=V(Y_2z_2)V\left(\frac{\Xi_2''-\Xi_2}{N^{1/3}Y_2/Q}\right)V\left(\frac{\Xi_2''-\Xi_2}{N^{1/3}Y_2/Q}\right)$$
satisfies the decay relation $\partial^jU(z_2)\ll 1$. Then, we can rewrite the $\cI_2$ integral as 
\begin{align*}
    \cI_2=XY_2\int\limits_{z_2}e\bigg(\frac{[(\Xi_2'-\Xi_2)^3-(\Xi_2''-\Xi_2)^3]}{\tilde u_1(m_1\tilde v_1dh_1h_2h_2'q_2q_2'm_1)^{-1}(q_2-q_2')}\bigg)U(z_2)dz_2
\end{align*}
If we denote the phase function by $f(z_2)$, we get 
\begin{align*}
    f'(z_2)=& \frac{m_1^2\tilde v_1dh_1h_2h_2'q_2q_2'(\Xi_2'-\Xi_2'')}{\tilde u_1(q_2-q_2')} \cdot \frac{rXY_2\Xi_2}{(T+rXY_2z_2)}\cdot [2\Xi_2-(\Xi_2'+\Xi_2'')]. 
\end{align*}
As $\Xi_2'-\Xi_2\sim \frac{X_1(q_2-q_2')}{Q/m_1}$ and $\Xi_2''-\Xi_2\sim \frac{X_1(q_2-q_2')}{Q/m_1}$, we get $2\Xi_2-(\Xi_2'+\Xi_2'')\sim -\frac{X_1(q_2-q_2')}{Q/m_1}$ in the support of $U(z_2)$. We also recall $\tilde u_1\sim Q_1\sim m_1^2v_1h_1h_2h_2'\sqrt{m}(q_2'-q_2)X_1^{3/2}Q^{-1}$.  Hence, 
$$|f'(z_2)|\sim   |\Xi_2'-\Xi_2''|\cdot \frac{Q\cdot q_2q_2'\cdot Y_2\cdot \frac{N^{1/3}}{Q} }{\sqrt{m}|q_2-q_2'|^2X_1^{3/2}} \cdot \frac{|q_2-q_2'|X_1}{Q/m_1}\sim M_0^{1/3}\cdot |\Xi_2'-\Xi_2''|.$$
Hence, by repeated integration by parts (Lemma \ref{repeatedint}), the integral is negligibly small unless 
\begin{align}
    |\Xi_2'-\Xi_2''|\ll & \frac{T^{\varepsilon}}{M_0^{1/3}} 
\iff |\xi_2'-\xi_2''|\ll XT^{\varepsilon}\cdot \frac{Q}{M_0^{1/3}N^{1/3}} \label{Xi2'X-2''range}
\end{align}
By Cauchy's inequality and symmetry of the two factors, we can bound $S_\xi$ (\ref{S_xi}) by 
\begin{align*}
  S_{\xi}=\int\limits_{\xi_2'\sim X}\alpha_2'\int\limits_{\xi_2''\sim X}\bar \alpha_2''\cdot \cI_2\ll \int\limits_{\xi_2'\sim X}|\alpha_2'|^2   \int\limits_{\xi_2''\sim X}|\cI_2|\ll \int\limits_{\xi_2'\sim X}|\alpha_2'|^2 \cdot \frac{X^2X_1Q^2}{M_0^{1/3}N^{2/3}}.
\end{align*}
In (\ref{S4sum}), if we replace $|S_{\xi}|$ with this bound, from (\ref{S3bound}), we have 
\begin{align}
    |S_4|\ll |S_3|\cdot \frac{X^2X_1Q^2}{M_0^{1/3}N^{2/3}}\ll \frac{X^2X_1Q^2T^{\varepsilon}}{M_0^{1/3}N^{2/3}}\left(\frac{Q^{15/2}X_1^{5/2}T^{5/2}}{N^{17/6}M_0}+\frac{Q^{11/2}X_1^{1/2}T^{3/2}}{M_0^{2/3}N^{11/6}}\right).\label{S4bound} 
\end{align}
Thus, using (\ref{S3bound}) and (\ref{S4bound}), in this case we can bound $\Delta$ by 
\begin{align*}
    \Delta&\ll \sup_{||\alpha||_2=1}\sup_{\frac{T^{\varepsilon}}{M_0^{1/3}}\ll X_1\ll \frac{N^{1/3}}{Q}}Q\sqrt{XM_0}\cdot S_3^{1/4}S_4^{1/4}
    \ll\frac{T^{\varepsilon}Q^{15/4}XT^{5/4}}{M_0^{1/12}N^{13/12}}+\frac{T^{\varepsilon}Q^{15/4}XT^{3/4}M_0^{1/12}}{N^{11/12}}.
\end{align*}
Finally, in this case, we can bound $\mathfrak{M}$ (Lemma \ref{Largesievemain}) by 
\begin{align}
    \mathfrak{M} \ll&\frac{N^{4/3}}{T}\sup_{Q\ll Q_0}\frac{1}{Q^2}\sup_{M_0\ll \frac{N^{2+\varepsilon}}{Q_0^3}}M_0^{1/3}\times \Delta\ll  \frac{N^{4/3}}{T}\cdot\frac{X^{3/2}T^{3/4}}{N^{1/12}}\left(1+\frac{N^{1/4}}{X^{1/4}T^{1/4}}\right)\nonumber\\
    %\ll& N^{3/4}T^{1/2}X^{3/2}
    \ll& N^{5/4}T^{-1/4}X^{3/2}.\label{mathfrakMgeneric} 
\end{align}    
\subsection{Non-generic cases}
\subsubsection{$X_1\ll \frac{T^{\varepsilon}}{M_0^{1/3}}$}\label{X_1small}
When $m\neq 0$ and $X_1\ll \frac{T^{\varepsilon}}{M_0^{1/3}}$, we have $m\ll \frac{Q^2T^{\varepsilon}}{M_0}$. Hence, $S_{m_2}\ll \frac{M_0}{m_1^2}\sum_{m\ll \frac{Q^2T^{\varepsilon}}{M_0} }|\mathfrak{C}_0|\ll \frac{dQ^2T^{\varepsilon}}{m_1^2}. $
From the dyadic relation $\Xi_2-\Xi_1\asymp X_1$, we get 
$|\xi_2-B\xi_1|\ll \frac{XX_1Q}{N^{1/3}}$
where $B$ is depends on $q_1,q_2,h_1,h_2$ with value bounded between absolute constants ($B\sim 1$). Then, $\Delta$ (\ref{DeltaAfterfirstpoisson}) is bounded  by 
\begin{align*}\Delta
\ll& T^{\varepsilon}\sup_{X_1\ll \frac{T^{\varepsilon}}{M_0^{1/3}}}\sum_{m_1\ll Q}m_1^2\cdot \frac{XX_1Q}{N^{1/3}}\cdot \frac{Q}{m_1}\cdot \frac{QT}{N}\cdot Q^2
\ll \frac{T^{\varepsilon}{TXQ^5}}{N^{4/3}M_0^{1/3}}.
\end{align*}
Hence, 
\begin{align}
    \mathfrak{M} 
    \ll& \frac{N^{4/3+\varepsilon}}{T}\frac{TXQ_0^3}{N^{4/3}}\ll T^{\varepsilon} XQ_0^3\asymp N^{3/2+\varepsilon}X^{5/2}T^{-3/2}.\label{mathfrakMX_1small}
\end{align}
\subsubsection{$\tilde u_1=0$}\label{u_1zero}
In this case, there is no stationary phase in the $\cI$ (\ref{defnofcI}). From (\ref{partialybound}), we note that the integral $\cI$ is negligibly small unless 
$|\Xi_2-\Xi_2'|\ll \frac{T^{\varepsilon}}{Y_1M_0^{1/3}}\iff |\xi_2'-B\xi_2|\ll \frac{T^{\varepsilon}XQ}{N^{1/3}Y_1M_0^{1/3}},$
for some $B\sim 1$ depending on $q_2,q_2',h_2,h_2'$. From this relation and (\ref{Xi_2q_2relation}), we get that $|u_2-u_2'|\ll \frac{QT^{\varepsilon}}{m_1d\tilde v_2Y_1X_1M_0^{1/3}}.$ Then, we can bound $\cI$ (\ref{mathcalI}) by $XY_1^2$. 
Using (\ref{charsumu1zero}) to bound $\cC$ (along with the condition $\tilde h_2=\tilde h_2'$) and following the steps of the generic case, we can bound $\Delta$ (\ref{Deltaafterpoissononq1}) by 
\begin{align*}
    \Delta\ll T^{\varepsilon} \frac{XQ^{3}}{N^{1/6}}\left(M_0^{1/12}+\frac{T^{1/2}}{N^{1/3}}\right)\ll \frac{T^{\varepsilon}XQ^{3}M_0^{1/12}}{N^{1/6}}.
\end{align*}
Hence, 
\begin{align}
    \mathfrak{M} \ll&\frac{N^{4/3}}{T}\sup_{Q\ll Q_0}\frac{1}{Q^2}\sup_{M_0\ll \frac{N^{2+\varepsilon}}{Q_0^3}}M_0^{1/3}\times \Delta
    \ll \frac{N^{4/3+\varepsilon}Q_0X}{TN^{1/6}}\cdot \frac{N^{5/6}}{Q_0^{5/4}}
   \ll N^{15/8+\varepsilon}X^{7/8}T^{-7/8}.\label{MathfrakMzero}
\end{align}
\subsubsection{$|q_2-q_2'|\ll \frac{QT^{\varepsilon}}{m_1M_0^{1/3}Y_1X_1}$}\label{q_2small}
In this case, the phase function of $\cI$ (\ref{defnofcI}) does not have a stationary point. But from (\ref{partialybound}), we have 
$ |\xi_2'-B\xi_2|\ll \frac{XQ}{N^{1/3}Y_1M_0^{1/3}}$ and $\tilde u_1\ll \frac{mh_1h_2h_2'm_1\tilde v_1dT^{\varepsilon}}{Y_1Q} $, otherwise $\cI$ is negligibly small. Then, using (\ref{charsumu3smallu1nonzero}) to bound $\cC$, we can bound (\ref{Deltaafterpoissononq1}) 
\begin{align}
    \Delta\ll &\frac{T^{\varepsilon}Q^{21/4}XT^{9/4}}{N^{9/4}M_0^{1/4}}\nonumber\\
    \text{ and }~~\mathfrak{M}\ll& N^{3/4}X^{5/2}T^{-1/4}\label{MathfrakMsmallq}
\end{align}
where the second term is always smaller than the first term. 
\subsubsection{Error term}\label{errorterm}
In this case, we consider the error term of (\ref{mathcalI}), i.e., $\cI\ll \frac{XQ^2|q_2-q_2'|^{-2}}{M_0^{2/3}X_1^2m_1^2}.$ The dyadic condition $\Xi_2'-\Xi_2\sim \frac{(q_2-q_2')X_1}{Q/m_1}$ implies $\xi_2'-B\xi_2\sim \frac{XX_1m_1(q_2-q_2')}{N^{1/3}}$ where $B$ depends on $q_2,q_2',h_2,h_2'$ and $B\sim 1$. Then, the rest of the analysis is very similar to the analysis leading to (\ref{S3bound}) and we achieve 
\begin{align}
    \Delta\ll & QX \left(\frac{Q^{3}T^{5/4}}{M_0^{1/6}N^{7/6}}+\frac{Q^{11/4}T^{3/4}M_0^{1/12}}{N^{11/12}}\right)\nonumber
    ~~~\text{and}~~~\mathfrak{M}\ll N^{5/4+\varepsilon}X^{7/4}T^{-1/2}+N^{1+\varepsilon}X^{3/4}\label{mathfrakMerror}
\end{align}

\section{Proof of Theorem \ref{maintheorem}}\label{maintheoremproof}
We collect all the bound on $\mathfrak{M}$ from (\ref{mathfrakmdia}), (\ref{mathfrakMgeneric}), (\ref{mathfrakMX_1small}), (\ref{MathfrakMzero}), (\ref{MathfrakMsmallq}) and (\ref{mathfrakMerror}) and we get 
\begin{align}
    \mathfrak{M}\ll T^{\varepsilon}\Bigg(&\frac{N^{3/2}\sqrt{T}}{\sqrt{X}}+\frac{N^{5/4}X^{3/2}}{T^{1/4}}+\frac{N^{3/2}X^{5/2}}{T^{3/2}}+\frac{N^{15/8}X^{7/8}}{T^{7/8}}+\frac{N^{3/4}X^{5/2}}{T^{1/4}}+\frac{N^{5/4}X^{7/4}}{T^{1/2}}\Bigg).\nonumber
\end{align}
We equate the first (diagonal) and the second bound (generic off-diagonal) to get 
$$\frac{N^{3/2}\sqrt{T}}{\sqrt{X}}=N^{5/4}T^{-1/4}X^{3/2}\iff X=N^{1/8}T^{3/8}.$$
As $N\ll T^{3/2+\varepsilon}$, all the other bounds are smaller than the first bound. This choice of $X$ also satisfies the condition $Q_0\gg T/X$ because $N\gg T^{15/11+\varepsilon}$. Hence, 
\begin{align}
    \mathfrak{M}\ll N^{23/16+\varepsilon}T^{5/16}
\end{align}
and \begin{align*}
    M_F(T)\ll &\sup_{T^{15/11+\varepsilon}\ll N\ll T^{3/2+\varepsilon}}\frac{1}{NX}\cdot X\cdot \frac{T}{X}\cdot \mathfrak{M}+T^{15/11+\varepsilon}
    \ll T^{3/2-3/32+\varepsilon},
\end{align*}
which concludes the proof of Theorem \ref{maintheorem}. 
\section*{Acknowledgment}
The author would like to thank Prof. Ritabrata Munshi for his continuous guidance and support throughout the course of this work. The author is grateful to Prof. Yongxiao Lin for his comment on Corollary \ref{cor2}. The author would like to thank Mayukh Dasaratharaman, Sumit Kumar, Kummari Mallesham, and Prahlad Sharma for many helpful discussions. The author is also grateful to the Indian Statistical Institute, Kolkata, for providing an excellent research atmosphere. 

\end{document}